\newcommand\blfootnote[1]{%
	\begingroup
	\renewcommand\thefootnote{}\footnote{#1}%
	\addtocounter{footnote}{-1}%
	\endgroup
}
\newtheorem*{teorema}{Theorem}
\newtheorem{Teorema}{Theorem}[section]
\newtheorem{Example}[Teorema]{Example}
\newtheorem{cor}[Teorema]{Corollary}
\newtheorem{lema}[Teorema]{Lemma}
\theoremstyle{definition}
\newtheorem{definition}[Teorema]{Definition}
\newcommand{\CC}{{\mathbb C}}
\newcommand{\PP}{{\mathbb P}}
\newcommand{\OO}{{\mathcal O}}
\newcommand{\Sym}{{Sym}}
\newcommand{\Hom}{\operatorname{Hom}}
\newcommand{\End}{\operatorname{End}}
\newcommand{\im}{\operatorname{Im}}
\newcommand{\diag}{\operatorname{diag}}
\title{On tensors that are determined by their singular tuples}
\author{Ettore Turatti}
\date{}
\begin{document}

 	\maketitle
 	\begin{abstract}
 		In this paper we study the locus of singular tuples of a complex valued multisymmetric tensor. The main problem that we focus on is: given the set of singular tuples of some general tensor, which are all the tensors that admit those same singular tuples. Assume that the triangular inequality holds, that is exactly the condition such that the dual variety to the Segre-Veronese variety is an hypersurface, or equivalently, the hyperdeterminant exists. We show in such case that, when at least one component has degree odd, this tensor is projectively unique. On the other hand, if all the degrees are even, the fiber is an $1$-dimensional space.
 	\end{abstract}

 	\section{Introduction}

 	  Let $V_1,\dots,V_k$ be vector spaces over $\mathbb C$ of dimension $m_1+1,\dots,m_k+1$, and let $q_l$ be a quadratic form on $V_l$ that, after a choice of basis for $V_l$, is given in coordinates by $q_l=x_0^2+\dots+x_{m_l}^2$ that defines the distance function in $V_l$. Given a tensor $t\in\Sym^{d_1}V_1\otimes\dots\otimes Sym^{d_k}V_k$, we say that a rank-one tensor $v=v_1\otimes\dots\otimes v_k$ is a singular tuple of $t$ if we have that for each flattening $t:\Sym^{d_1}V_1\otimes\dots\otimes\Sym^{d_l-1}V_l\otimes\dots\otimes \Sym^{d_k}V_k\rightarrow V_l$,  $t(v_1^{d_1}\otimes \dots\otimes v_l^{d_l-1}\otimes \dots \otimes v_k^{d_k})= \lambda v_l$. In the particular case that $k=1$ we call $v$ an eigentensor of $t$, and it satisfies $t(v^{d-1})=\lambda v$. The notion of eigentensor has been introduced in $2005$ by Qi for symmetric tensors \cite{Chang}, \cite{Qi} and it was generalized by Lim \cite{Lim} for tensors as singular tuples in the same year. Moreover, singular tuples extend the notion of eigenvector of a matrix to a tensor of any order.
 	  The first interesting result on singular tuples is proven in \cite{Lim}, the singular tuples of a given tensor $t$ are the critical points of the distance function between $t$ and the Segre-Veronese variety of rank-$1$ tensors. This opens a new perspective to tensor optimization, and a possible direction to obtain a general notion of the Eckart-Young Theorem. Indeed, this theme has been studied in many works, we suggest \cite{ABO2016121}, \cite{Boralevi}, \cite{CARTWRIGHT2013942}, \cite{Horobet}, \cite{Draisma}, \cite{Oeding}, \cite{Paoletti}, \cite{Sodomaco}, \cite{sodomaco2019product}, \cite{Vannieuwenhoven} for a clearer picture of the topic.\blfootnote{Università di Firenze - DIMAI, Viale Morgagni, 67/A, 50134 Firenze, Italy - ettore.teixeiraturatti@unifi.it}\blfootnote{$2020$ Mathematics Subject Classification. $14$N$07$; $14$M$17$; $15$A$18$; $15$A$69$; $58$K$05$.}\blfootnote{\emph{Key words}: Tensor; Eigentensor; Singular tuple; Critical points. }
 	  
 	  A natural question to be posed is, given the singular tuples of a tensor $t$, which are all the tensors that admit such configuration of singular tuples? The answer for the matrix case is described by the {singular value decomposition}, where changing the singular values in the decomposition gives all the matrices with same singular tuples, this is described in Example \ref{Matrix}. The problem for symmetric tensors was first studied on \cite{Abo} for the particular case of $\Sym^d\CC^3$ and $d$ odd, later in \cite{Beorchia} the result obtained has been revisited for general $d$, and Beorchia, Galuppi and Venturello stated the theorem that we show next. We denote by $ed_X$ the $ED$-degree of the Veronese variety $X$. Let $\Sigma_n$ be the $n$-symmetric group of permutations, we denote by $\PP V^{(n)}=(\PP V)^{n}/\Sigma_n$ the symmetric cartesian product such that the points are unordered, let $Eig(f)\subset \PP V^{(ed_X)}$ be the set of eigentensors of a general polynomial $f$ and $q=x_0^2+\dots+x_m^2$ the distance function in a basis of the vector space $V$.
 	  
 	  \begin{teorema}[Beorchia-Galuppi-Venturello (Theorem A, \cite{Beorchia})]
 	  	Let $V$ be a $3$-dimensional vector space, let $$\tau:\PP\big(\Sym^dV\big)\dashrightarrow\PP V^{(ed_X)},\ f\mapsto Eig(f)$$
 	  	be the map that associates a general polynomial $f$ to the set of its eigentensors. Then, if $f\in \PP(\Sym^dV)$ is a general polynomial, we obtain that $$
 	  	\tau^{-1}(\tau(f))=\setlength\arraycolsep{0pt}
 	  	\renewcommand\arraystretch{1.25}
 	  	\left\{
 	  	\begin{array}{llll}
 	  	[f], \text{ if $d$ is odd;} \\ 
 	  	\text{}\{[f + cq^{\frac{d}{2}}]|c\in\CC\}, \text{ if $d$ is even.}
 	  	\end{array}
 	  	\right.
 	  	$$
 	  	
 	  	Moreover, the image of $\tau$ has dimension $$
 	  		\dim(\im(\tau))=\setlength\arraycolsep{0pt}
 	  	\renewcommand\arraystretch{1.25}
 	  	\left\{
 	  	\begin{array}{llll}
 	  	\binom{d+2}{2}-1, \text{ if $d$ is odd;} \\ 
 	  	\text{}\binom{d+2}{2}-2, \text{ if $d$ is even.}
 	  	\end{array}
 	  	\right.
 	  	$$
 	  \end{teorema}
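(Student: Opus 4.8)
The plan is to reformulate everything in terms of sheaf cohomology on $\mathbb{P}^2=\mathbb{P}V$. Recall the standard description of the eigenscheme: for $f\in\Sym^dV$, the composite of the gradient $\nabla f\colon\mathcal{O}_{\mathbb{P}^2}\to\mathcal{O}_{\mathbb{P}^2}(d-1)^{\oplus 3}$ with the Euler surjection $\mathcal{O}_{\mathbb{P}^2}(d-1)^{\oplus 3}\twoheadrightarrow T_{\mathbb{P}^2}(d-2)$ is a section $s_f\in H^0\big(\mathbb{P}^2,T_{\mathbb{P}^2}(d-2)\big)$ whose zero scheme is exactly the eigenscheme $\mathrm{Eig}(f)$, of expected length $c_2\big(T_{\mathbb{P}^2}(d-2)\big)=d^2-d+1=ed_X$. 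The assignment $\Phi\colon f\mapsto s_f$ is linear, and $\tau$ is the composition of $\mathbb{P}(\Phi)$ with the map sending a section to its zero locus. Hence the whole theorem reduces to two facts: (a) the computation of $\ker\Phi$, and (b) the statement that, for a general $f$, the only sections of $T_{\mathbb{P}^2}(d-2)$ vanishing on $Z:=\mathrm{Eig}(f)$ are the scalar multiples of $s_f$, i.e. $H^0\big(\mathbb{P}^2,T_{\mathbb{P}^2}(d-2)\otimes I_Z\big)=\CC\,s_f$.

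For (a): $s_f=0$ means that $\nabla f$ is pointwise proportional to the Euler vector field, i.e. $x_i\partial_jf=x_j\partial_if$ for all $i,j$; unique factorization then forces $\partial_if=x_ih$ for one form $h$ of degree $d-2$, and Euler's identity $\sum_ix_i\partial_if=df$ gives $qh=df$, so $q\mid f$. Since $h=df/q$ again satisfies the same equations, iterating yields $q^{\lfloor d/2\rfloor}\mid f$; thus $\ker\Phi=\CC\,q^{d/2}$ if $d$ is even, while if $d$ is odd the final quotient is a linear form with gradient proportional to $x$, hence zero, so $\ker\Phi=0$. This part is routine. In particular $\dim\im\Phi=\binom{d+2}{2}-\varepsilon$, with $\varepsilon=1$ for $d$ even and $\varepsilon=0$ for $d$ odd.

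The core of the argument is (b), and I expect it to be the main obstacle. For general $f$ the section $s_f$ is regular, so $Z=Z(s_f)$ is a reduced, zero-dimensional local complete intersection of $ed_X$ points; here one should first record that a general ternary form really does have $ed_X$ distinct eigenpoints (the Cartwright--Sturmfels count). Then the Koszul resolution of $I_Z$, using $\det\big(T_{\mathbb{P}^2}(d-2)\big)=\mathcal{O}(2d-1)$ and $T_{\mathbb{P}^2}^\vee=\Omega^1_{\mathbb{P}^2}$, reads
$$0\longrightarrow\mathcal{O}_{\mathbb{P}^2}(-(2d-1))\longrightarrow\Omega^1_{\mathbb{P}^2}(2-d)\xrightarrow{\ s_f\ }I_Z\longrightarrow 0.$$
Tensoring by $T_{\mathbb{P}^2}(d-2)$ and using $\Omega^1_{\mathbb{P}^2}\otimes T_{\mathbb{P}^2}=\mathcal{E}nd(T_{\mathbb{P}^2})$ gives
$$0\longrightarrow T_{\mathbb{P}^2}(-d-1)\longrightarrow\mathcal{E}nd(T_{\mathbb{P}^2})\longrightarrow T_{\mathbb{P}^2}(d-2)\otimes I_Z\longrightarrow 0.$$
Now $H^0(T_{\mathbb{P}^2}(-d-1))=0$, $H^0(\mathcal{E}nd(T_{\mathbb{P}^2}))=\CC$ because $T_{\mathbb{P}^2}$ is simple, and $H^1(T_{\mathbb{P}^2}(-d-1))=H^1(\Omega^1_{\mathbb{P}^2}(2-d))=0$ by Bott's formula whenever $d\neq 2$; therefore $H^0\big(\mathbb{P}^2,T_{\mathbb{P}^2}(d-2)\otimes I_Z\big)=\CC\,s_f$. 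The low-degree cases are handled separately: $d=1$ is trivial, and $d=2$ is the classical fact that a symmetric $3\times 3$ matrix is determined up to adding a scalar matrix by its (generically distinct) eigenvectors, which is exactly the stated formula with $q^{d/2}=q$.

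Finally one assembles the pieces. If $[g]\in\tau^{-1}(\tau([f]))$ then $g$ has eigenscheme $Z$ (reduced), so $s_g$ vanishes on $Z$, hence $s_g=\lambda s_f$ by (b); the case $\lambda=0$ is excluded because then $g\in\ker\Phi$ would have all of $\mathbb{P}^2$ as its eigenscheme, so $\lambda\neq 0$ and $g-\lambda f\in\ker\Phi$, which by (a) gives $[g]=[f]$ for $d$ odd and $[g]=[f+c\,q^{d/2}]$, $c\in\CC$, for $d$ even; the reverse inclusions are immediate since $s_{q^{d/2}}=0$. And (b) also shows that $Z$ recovers $[s_f]$, so the second factor of $\tau$ is birational onto its image; consequently $\dim\im(\tau)=\dim\mathbb{P}(\im\Phi)=\binom{d+2}{2}-1-\varepsilon$, which is $\binom{d+2}{2}-1$ for $d$ odd and $\binom{d+2}{2}-2$ for $d$ even.
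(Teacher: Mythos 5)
For $d\geq 3$ your argument is correct and is essentially the route the paper takes: the same factorization $\tau=\psi\circ\varphi$ through the section $s_f$ of $Q(d-1)=T_{\PP^2}(d-2)$, the same two reductions (kernel of the linear map, plus $h^0(\mathcal I_Z\otimes Q(d-1))=1$), and the same Koszul mechanism for the second reduction. Two points of genuine difference are worth recording. First, your computation of $\ker\varphi$ by unique factorization and Euler's identity is more elementary than the paper's Lemma \ref{symmetric}, which decomposes $\Sym^dV$ into harmonic $SO(V)$-modules $H_{d-2j}$ and invokes Schur's lemma; your argument is self-contained, while the equivariant one is what scales to the multisymmetric setting later in the paper. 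Second, on $\PP^2$ the bundle has rank $2$, so the Koszul complex collapses to the single short exact sequence you wrote, and the paper's chain of isomorphisms through the sheaves $\mathcal F_r$ (Lemma \ref{h0 sym}) becomes unnecessary; your identifications $\det(T_{\PP^2}(d-2))=\OO(2d-1)$ and $T_{\PP^2}(-d-1)=\Omega^1_{\PP^2}(2-d)$ are correct, and Bott's formula gives the required vanishing exactly for $d\neq 2$.

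Your treatment of $d=2$, however, is wrong: a general symmetric $3\times 3$ matrix is \emph{not} determined by its eigenvectors up to adding a multiple of the identity. Writing $A=UDU^t$, every matrix $UD'U^t$ with $D'$ diagonal has the same eigenvectors, so the fiber is $2$-dimensional projectively, not the line $\{[A+c\,q]\}$; correspondingly $h^0(\mathcal I_Z\otimes T_{\PP^2})\geq 2$ and the statement genuinely fails at $d=2$. This is precisely why the paper's version (Theorem \ref{fsym}) carries the hypothesis $d\geq 3$, which is implicit in the cited Theorem A as well. So you should simply exclude $d=2$ rather than attempt to patch it; with that correction the proof stands, including the dimension count $\dim\im(\tau)=\dim\PP(\Sym^dV)-\dim\tau^{-1}(\tau(f))$.
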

 	  
 	     	The approach utilised in the proof of [Theorem A, \cite{Beorchia}] was difficult to generalize to polynomials in more than three variables, so we opted for a different technique. We notice that we can decompose the map $\tau=\psi\circ\varphi$ into two parts, the first map that we use is the projectivization of the linear map $\varphi:\Sym^dV\rightarrow H^0(Q(d-1))$, where $Q$ is the quotient bundle and a global section $s_f$ associated to a polynomial $f$ is defined by $\varphi(f)=s_f=\begin{bmatrix}
 	     	\nabla f(x)\\
 	     	x
 	     	\end{bmatrix} $; we allow an abuse of notation for simplicity and we denote the projectivization $\PP(\varphi)$ also by $\varphi$. The second map $\psi: \PP(H^0(Q(d-1)))\dashrightarrow \PP V^{(ed_X)}$ takes the zero locus of the section $s_f$ that is exactly the zero dimensional eigenscheme $Eig(f)$. With this approach we were able to generalize [Theorem A, \cite{Beorchia}] to polynomials in any number of variables.
 	     	
 	     	 	\begin{Teorema}\label{fsym}
 	     		Let $V$ be a vector space of dimension $m+1$. Let $d\geq 3$ be an integer, and $f\in \PP(\Sym^dV)$ be a general polynomial. Let $$\tau:\PP\big(\Sym^d V\big)\dashrightarrow \PP V^{(ed_X)}, \ f\mapsto Eig(f)$$
 	     		be the map that associates $f$ to its eigentensors locus $Eig(f)$. Then $$
 	     		\tau^{-1}(\tau(f))=\setlength\arraycolsep{0pt}
 	     		\renewcommand\arraystretch{1.25}
 	     		\left\{
 	     		\begin{array}{llll}
 	     		[f], \text{ if $d$ is odd;} \\ 
 	     		\text{}\{[f + cq^{\frac{d}{2}}]|c\in\CC\}, \text{ if $d$ is even}.
 	     		\end{array}
 	     		\right.
 	     		$$
 	     		
 	     		Moreover, the image of the map $\tau$ has dimension $$
 	     		\dim(\im(\tau))=\setlength\arraycolsep{0pt}
 	     		\renewcommand\arraystretch{1.25}
 	     		\left\{
 	     		\begin{array}{llll}
 	     		\binom{d+m}{d}-1, \text{ if $d$ is odd;}\\ 
 	     		\binom{d+m}{d}-2, \text{ if $d$ is even}.
 	     		\end{array}
 	     		\right.
 	     		$$

 	     	\end{Teorema}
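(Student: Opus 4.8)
\emph{Proof idea.} The plan is to exploit the factorization $\tau=\psi\circ\varphi$ described above, where $\varphi$ is the projectivization of the linear map $g\mapsto s_g$ ($s_g$ being the image of $\nabla g\in V\otimes H^0(\OO(d-1))$ under the surjection $V\otimes\OO(d-1)\twoheadrightarrow Q(d-1)$ whose kernel is $\OO(d-2)$, embedded by multiplication by $x=(x_0,\dots,x_m)$), and $\psi$ sends a section to its zero scheme. Since $ed_X=c_m\bigl(Q(d-1)\bigr)=1+(d-1)+\dots+(d-1)^m$, for general $f$ the eigenscheme $Eig(f)=Z(s_f)$ is reduced of exactly $ed_X$ points; moreover, whenever $Z(s_g)$ is zero-dimensional it too has length $ed_X$, so in that case $Z(s_g)\supseteq Eig(f)$ already forces $Z(s_g)=Eig(f)$. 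Hence $\tau^{-1}(\tau(f))=\{[g]: Z(s_g)\text{ zero-dimensional and }Z(s_g)\supseteq Eig(f)\}$, and the theorem reduces to two points: (a) compute $\varphi^{-1}(\varphi(f))$; (b) show $\psi$ is generically injective on $\im\varphi$, i.e.\ that $H^0\bigl(\mathcal I_{Eig(f)}\otimes Q(d-1)\bigr)\cap\varphi(\Sym^dV)=\CC\,s_f$ for general $f$.

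Part (a) is elementary. The underlying linear map $\Sym^dV\to H^0(Q(d-1))$ kills $g$ precisely when $\nabla g\in x\cdot H^0(\OO(d-2))$, i.e.\ $\partial_i g=x_ih$ for all $i$ and some form $h$ of degree $d-2$; then $\partial_j\partial_i g=\partial_i\partial_j g$ forces $x_i\partial_j h=x_j\partial_i h$, and contracting with $x_i$ together with Euler's identity on $h$ gives $q\,\partial_j h=(d-2)x_j h$. As $q$ is irreducible ($m\ge2$) and $d\ne2$, this forces $q\mid h$; iterating on $h/q$ and so forth, one finds $h\in\CC\cdot q^{(d-2)/2}$, hence $g\in\CC\cdot q^{d/2}$, when $d$ is even, while for $d$ odd the descent terminates at a linear form $\ell$ constrained by $q\mid x_j\ell$, forcing $\ell=0$ and so $g=0$. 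Thus $\ker\varphi=\CC\cdot q^{d/2}$ for $d$ even and $0$ for $d$ odd; consequently $\varphi^{-1}(\varphi(f))$ is exactly the fiber in the statement and $\dim\im\varphi=\binom{d+m}{d}-1$ for $d$ odd, $\binom{d+m}{d}-2$ for $d$ even.

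Part (b) is the crux. Fix $f$ general and set $Z=Eig(f)$. The $2\times2$ minors $M_{kl}(f):=x_k\partial_l f-x_l\partial_k f$ of $[\,x\mid\nabla f\,]$ define $Z$ with the expected codimension $m$, so by Eagon–Northcott the ideal they generate is Cohen–Macaulay, hence saturated, hence equal to $I_Z$, and minimally generated by the $\binom{m+1}{2}$ minors, all of degree $d$; thus $(I_Z)_j=0$ for $j<d$ and $(I_Z)_d=\langle M_{kl}(f):k<l\rangle$. Since $Z$ is reduced, the condition $Z(s_g)\supseteq Z$ on $g\in\Sym^dV$ says exactly that every point of $Z$ is an eigenvector of $g$, i.e.\ $M_{ij}(g)\in(I_Z)_d=\langle M_{kl}(f)\rangle$ for all $i<j$. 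One must show this forces $M_{ij}(g)=\lambda M_{ij}(f)$ for a single scalar $\lambda$ — equivalently $g\in\langle f,q^{d/2}\rangle$, since $M_{ij}(q^{d/2})=0$. Writing $M_{ij}(g)=\sum_{k<l}a^{kl}_{ij}M_{kl}(f)$ and substituting into the Plücker-type syzygies $x_iM_{jk}-x_jM_{ik}+x_kM_{ij}=0$ (valid for $f$ and for $g$), the syzygy module of $I_Z$ — again governed by Eagon–Northcott — constrains the constants $a^{kl}_{ij}$, the claim being that it pins them down to $a^{kl}_{ij}=\lambda\,\delta^{kl}_{ij}$.

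This rigidity is the main obstacle; equivalently, one needs the vanishing of $\ker\bigl(d\psi_{[s_f]}\big|_{T_{[s_f]}\im\varphi}\bigr)$, i.e.\ $H^0(\mathcal I_Z\otimes Q(d-1))\cap\varphi(\Sym^dV)=\CC\,s_f$, which one may attack by resolving $\mathcal I_Z$ with the Koszul complex of $s_f$, tensoring by $Q(d-1)$, and computing the resulting cohomology through the Euler sequence and Bott's formula for $H^\bullet(\PP^m,\Omega^p(\ell))$, and then intersecting with the space of gradients. Note also that $\dim\{g:Z(s_g)\supseteq Z\}$ is upper semicontinuous in $f$ and is always at least $2$ (resp.\ $1$), so it would in fact suffice to verify the reverse bound at one convenient $f$ for each $m$ and parity of $d$, e.g.\ at a Fermat form. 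Granting (b), one obtains $\tau^{-1}(\tau(f))=\varphi^{-1}(\varphi(f))$ for general $f$ — namely $\{[f+cq^{d/2}]:c\in\CC\}$ when $d$ is even (the point $[q^{d/2}]$, where $Eig$ is all of $\PP^m$, being excluded) and $\{[f]\}$ when $d$ is odd — and, $\psi$ being then generically injective on $\im\varphi$, $\dim\im\tau=\dim\im\varphi$, which gives the stated dimensions.
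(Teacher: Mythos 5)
Your overall strategy coincides with the paper's: factor $\tau=\psi\circ\varphi$ through $\PP(H^0(Q(d-1)))$, compute $\ker\varphi$, and then show that a general eigenscheme $Z$ determines the section up to scalar. Your part (a) is carried out by a genuinely different and perfectly valid route: instead of the paper's decomposition of $\Sym^dV$ into harmonic $SO(V)$-modules $H_{d-2j}$ plus Schur's lemma, you characterize $\ker\varphi$ as $\{g:\nabla g=xh\}$ and run an elementary descent via cross-derivatives and Euler's identity, $q\,\partial_jh=(d-2)x_jh$, forcing $h\in\CC\,q^{(d-2)/2}$. This is more hands-on and avoids representation theory entirely; its only blemish is the appeal to irreducibility of $q$, which fails for $m=1$ (where $q=(x_0+ix_1)(x_0-ix_1)$), a case the statement does not exclude, so you would need a separate word there.

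The genuine gap is part (b), which you yourself flag as ``the main obstacle'' and then do not close. Everything in the theorem beyond the computation of $\ker\varphi$ rests on the claim that for general $f$ with $Z=Eig(f)$ one has $\dim H^0\bigl(\mathcal I_{Z}\otimes Q(d-1)\bigr)=1$ (or at least that its intersection with $\im\varphi$ is $\CC\,s_f$). You correctly name the tool — resolve $\mathcal I_Z$ by the Koszul complex of $s_f$, twist by $Q(d-1)$, and chase cohomology — which is exactly what the paper does in Lemma \ref{h0 sym}, but the content of that lemma is the vanishing of $H^{r-2}$ and $H^{r-1}$ of $\bigwedge^{r}Q^\ast\otimes Q((r-1)(1-d))$ for $r\geq 2$, which is where the hypothesis $d\geq3$ actually enters (the paper in turn defers this to the Bott computations of Lemma \ref{bott}); none of this is verified in your sketch. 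Your two proposed alternatives are likewise unexecuted: the Eagon--Northcott/syzygy route ends with ``the claim being that it pins them down to $a^{kl}_{ij}=\lambda\delta^{kl}_{ij}$'' without an argument, and the semicontinuity-plus-Fermat reduction is not accompanied by the verification at a specific $f$ that it requires. As written, the proposal establishes $\varphi^{-1}(\varphi(f))$ but not the injectivity of $\psi$ on the relevant locus, so the description of $\tau^{-1}(\tau(f))$ and the dimension count for $\im\tau$ remain unproved.
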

      	
      	The next natural step is to understand what happens in the case of multisymmetric tensors. Although several new technical lemmas are required, the approach to generalize this result is similar to the symmetric tensor case. Let $X\subset \PP\big(\Sym^{d_1}V_1\otimes\dots\otimes \Sym^{d_k}V_k\big)$ be the Segre-Veronese variety, $\pi_i:X\rightarrow \PP V_i$ be the projection on the $i$-th coordinate and $\tau:\PP\big(\Sym^{d_1}V_1\otimes\dots\otimes \Sym^{d_k}V_k\big)\dashrightarrow (\PP V_1\times\dots\times \PP V_k)^{(ed_X)}$ be the map that associates a multisymmetric tensor $T$ to the its eigenscheme $Eig(T)$, i.e. the locus of its singular tuples. We construct the bundle $\mathcal E=\bigoplus \pi_i^\ast Q_i(d_1,\dots,d_{i-1},d_i-1,d_{i+1},\dots,d_k)$ and use the fact that the zero locus of a global section $s_T\in H^0(\mathcal E)$ associated to a multisymmetric tensor $T$ is the singular tuples locus of $T$, as described in more details in both \cite{Draisma} and \cite{Friedland}, to split the map $\tau=\psi\circ\varphi$ as in the symmetric tensor case, where here we consider the projectivization of $\varphi:\Sym^{d_1}V_1\otimes \dots\otimes \Sym^{d_k}V_k\rightarrow H^0(\mathcal E)$ and $\psi:\PP(H^0(\mathcal E))\rightarrow (\PP V_1\times \dots\times \PP V_k)^{(ed_X)}$. 
      	
      	\begin{Teorema}\label{fmulti}
      		Let $V_1,\dots,V_k$ be vector spaces of dimension $m_1+1,\dots,m_k+1$. Let $d_1,\dots,d_k$ be positive integers, and $T\in \PP\big(\Sym^{d_1}V_1\otimes \dots\otimes \Sym^{d_k}V_k\big)$ be a general tensor. Let $$\tau:\PP\big(\Sym^{d_1}V_1\otimes \dots\otimes \Sym^{d_k}V_k\big)\dashrightarrow (\PP V_1\times\dots\times \PP V_k)^{(ed_X)}, T\mapsto Eig(T),$$
      		be the map that associates a tensor $T$ to its singular tuples locus $Eig(T)$. If $k\geq 3$ and suppose that $m_l\leq \sum_{j\neq l}m_j$ whenever $d_l=1$, and for $k=2$ we include the hypothesis that $(d_1,d_2)\neq (1,1)$, then $$
      		\tau^{-1}(\tau (T))=\setlength\arraycolsep{0pt}
      		\renewcommand\arraystretch{1.25}
      		\left\{
      		\begin{array}{llll}
      		[T], \text{ if $d_i$ is odd for some }i; \\ 
      		\{[T + cq_1^{\frac{d_1}{2}}\otimes\dots\otimes q_k^{\frac{d_k}{2}}]|c\in \CC\}, \text{ if $d_l$ is even for all } l.
      		\end{array}
      		\right.
      		$$ 
      		
      		Moreover, the image of the map $\tau$ has dimension $$
      	\dim(\im(\tau))=\setlength\arraycolsep{0pt}
      	\renewcommand\arraystretch{1.25}
      	\left\{
      	\begin{array}{llll}
      	\prod_{l=1}^k\binom{d_l+m_l}{d}-1, \text{ if $d_i$ is odd for some }i;\\ 
      	\prod_{l=1}^k\binom{d_l+m_l}{d}-2, \text{ if $d_l$ is even for all } l.
      	\end{array}
      	\right.
      	$$
      		 
      	\end{Teorema}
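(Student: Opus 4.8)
The plan is to mimic the strategy used for Theorem~\ref{fsym}, exploiting the factorization $\tau=\psi\circ\varphi$ recalled above, where $\varphi$ is the projectivization of the linear map $T\mapsto s_T\in H^0(\mathcal E)$ and $\psi$ sends a global section to its zero locus. Since $\varphi$ is linear and, for a reduced zero-dimensional scheme $Z$, the fibre $\psi^{-1}(Z)$ is an open subset of $\PP\big(H^0(\mathcal I_Z\otimes\mathcal E)\big)$, for a general $T$ one reduces the statement to two computations: the kernel of $\varphi$, and the number $h^0(\mathcal I_{Eig(T)}\otimes\mathcal E)$. Granting both (with the second equal to $1$), one gets $\psi^{-1}(\psi(s_T))=\{[s_T]\}$ and hence $\tau^{-1}(\tau(T))=\varphi^{-1}([s_T])$, which is the projectivization of $\langle T\rangle\oplus\ker\varphi$: a single point when $\ker\varphi=0$, a line otherwise. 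Since $\dim\PP\big(\Sym^{d_1}V_1\otimes\dots\otimes\Sym^{d_k}V_k\big)=\prod_l\binom{d_l+m_l}{d_l}-1$, the dimension of $\im(\tau)$ equals $\prod_l\binom{d_l+m_l}{d_l}-1-\dim\ker\varphi$, which yields the displayed formulas.

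First I would compute $\ker\varphi$. A tensor $U$ lies in $\ker\varphi$ exactly when, for every rank-one point $[v_1\otimes\dots\otimes v_k]$ and every $i$, the $i$-th flattening $U(v_1^{d_1}\otimes\dots\otimes v_i^{d_i-1}\otimes\dots\otimes v_k^{d_k})$ is proportional to $v_i$; equivalently, for each block $i$ there is a single form $g_i$ with $\partial U/\partial x^{(i)}_a=x^{(i)}_a g_i$ for all $a$. Euler's relation in the $i$-th block then gives $d_i\,U=q_i\,g_i$, and iterating over all blocks forces $U=c\,q_1^{d_1/2}\otimes\dots\otimes q_k^{d_k/2}$; this is impossible unless every $d_i$ is even, whereas when all the $d_i$ are even this element does lie in $\ker\varphi$, because each of its flattenings is, up to scalar, the relevant $v_i$. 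Hence $\ker\varphi$ is $0$ when some $d_i$ is odd and the line spanned by $q_1^{d_1/2}\otimes\dots\otimes q_k^{d_k/2}$ when all are even; in the latter case $T+c\,q_1^{d_1/2}\otimes\dots\otimes q_k^{d_k/2}$ is automatically in the fibre.

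The core of the argument --- and the step I expect to be the main obstacle --- is to show that $h^0(\mathcal I_{Eig(T)}\otimes\mathcal E)=1$ for general $T$, i.e.\ that the scalar multiples of $s_T$ are the only sections of $\mathcal E$ vanishing along the eigenscheme. This is where the hypotheses enter: the condition $m_l\le\sum_{j\ne l}m_j$ whenever $d_l=1$ (together with the exclusion $(d_1,d_2)\ne(1,1)$ when $k=2$) is precisely what makes $\mathcal E$ globally generated with $c_{\mathrm{top}}(\mathcal E)=ed_X$ and the dual of the Segre-Veronese variety a hypersurface, so that for general $T$ the section $s_T$ is regular and $Eig(T)=Z(s_T)$ is a reduced zero-dimensional scheme of length $ed_X$. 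I would then resolve $\mathcal I_{Eig(T)}$ by the Koszul complex $\wedge^{\bullet}\mathcal E^{\vee}$ of the regular section $s_T$, tensor it with $\mathcal E$, and run the hypercohomology spectral sequence; since $X$ is a product of projective spaces, Bott's formula together with the K\"unneth formula compute and, in most degrees, kill the cohomology of the bundles $\wedge^{j}\mathcal E^{\vee}\otimes\mathcal E$, so that $h^0(\mathcal I_{Eig(T)}\otimes\mathcal E)$ collapses to an explicit term that one checks equals $1$. An alternative, possibly more convenient when some $d_l=1$ (where the summand $\pi_l^{\ast}Q_l(\dots)$ and its defining sequence degenerate), is to specialize $T$ to a conveniently chosen tensor --- for instance one assembled from general symmetric tensors so as to invoke Theorem~\ref{fsym} on each factor --- verify the vanishing there, and conclude for general $T$ by semicontinuity of $h^0(\mathcal I_{Eig(T)}\otimes\mathcal E)$ together with the trivial lower bound $h^0\ge 1$.

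Finally, combining the two inputs: once $h^0(\mathcal I_{Eig(T)}\otimes\mathcal E)=1$, the projectivization of that space is the single point $[s_T]$, so $\tau^{-1}(\tau(T))=\varphi^{-1}([s_T])=\PP(\langle T\rangle\oplus\ker\varphi)$; by the kernel computation this is $[T]$ if some $d_i$ is odd and $\{[T+c\,q_1^{d_1/2}\otimes\dots\otimes q_k^{d_k/2}]\mid c\in\CC\}$ if all the $d_i$ are even, and the dimension of the image drops out as in the first paragraph.
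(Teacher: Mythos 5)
The central claim of your plan --- that $h^0(\mathcal I_{Eig(T)}\otimes\mathcal E)=1$ for general $T$, so that $\psi^{-1}(\psi(s_T))$ is the single point $[s_T]$ --- is false for $k\geq 2$, and this is a genuine gap rather than a computational detail. Since $\mathcal E=\bigoplus_{l=1}^k\mathcal E_l$ and $Z=Eig(T)$ is the \emph{common} zero locus of the components $s_T^{(1)},\dots,s_T^{(k)}$, each section $(0,\dots,0,s_T^{(l)},0,\dots,0)$ already lies in $H^0(\mathcal I_Z\otimes\mathcal E)$, so $h^0(\mathcal I_Z\otimes\mathcal E)\geq k$: one can rescale the summands of $s_T$ independently without changing the zero locus. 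Your Koszul computation, carried out correctly, does not collapse to $1$; what it yields (this is Theorem \ref{COH} in the paper) is an isomorphism $H^0(\End\mathcal E)\cong H^0(\mathcal I_Z\otimes\mathcal E)$, i.e.\ every section vanishing on $Z$ has the form $\rho(s_T)$ for a global endomorphism $\rho$ of $\mathcal E$, and $H^0(\End\mathcal E)$ has dimension at least $k$ (and strictly more when some $\PP V_i$ is a line, since then $H^0(\Hom(\mathcal E_i,\mathcal E_j))\neq 0$ for $i\neq j$). You appear to have transplanted Lemma \ref{h0 sym}, which is genuinely a one-dimensionality statement, from the case $k=1$ to the direct-sum bundle, where it no longer holds.

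Consequently your final assembly $\tau^{-1}(\tau(T))=\varphi^{-1}([s_T])$ does not follow: the fibre $\psi^{-1}(\psi(s_T))$ is (an open subset of) a projective space of dimension at least $k-1$, and you must intersect it with the image of $\varphi$. The missing ingredient is the paper's Lemmas \ref{end0} and \ref{end}: among the sections $\rho(s_T)$ with $\rho\in H^0(\End\mathcal E)$, the only ones that arise as $s_S$ for a tensor $S$ are the scalar multiples of $s_T$, because a section coming from a tensor is ``diagonal'' (the same $S$ appears in every summand), while $H^0(\End\mathcal E)$ acts by independent scalars on the summands with $\dim\PP V_j\geq 2$ and by a one-dimensional space of morphisms between summands indexed by $\PP^1$ factors. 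This step requires a separate argument and is entirely absent from your proposal. Your computation of $\ker\varphi$ via Euler's relation is a reasonable, more elementary alternative to the paper's Schur's-lemma decomposition into harmonics, and your reduction of the dimension count to $\dim\ker\varphi$ is fine once the fibre of $\psi$ restricted to $\im\varphi$ is correctly controlled.
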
	
      
      The hypothesis of the triangular inequality $m_l\leq\sum_{i\neq l}m_i$, that also appears in the description of the codimension of the critical space in \cite{Draisma}, could seem unnatural at first glance, but this condition can be understood in terms of the dual variety of the Segre-Veronese variety, as described in the next theorem.
      \begin{teorema}[Gelfand-Kapranov-Zelevinsky, Corollary 5.11, \cite{Gelfand}]
      	Suppose $X_l$ for $l=1,\dots,k$ is the projective space $\PP^{m_l}$ in the Veronese embedding into $\PP({\Sym^{d_l}V_l})$. Then the dual variety $(X_1\times\dots\times X_k)^\vee$ is a hypersurface if and only if $m_l\leq\sum_{i\neq l}m_i$ hold for all $l$ such that $d_l=1$. 
      \end{teorema}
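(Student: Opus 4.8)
\emph{Proof idea.} The plan is to compute the dual defect $\delta(X):=\dim\PP(W)-1-\dim X^\vee$ of the Segre--Veronese variety $X:=X_1\times\cdots\times X_k\subset\PP(W)$, where $W=\Sym^{d_1}V_1\otimes\cdots\otimes\Sym^{d_k}V_k$; since $X$ is smooth and we are over $\CC$, the dual $X^\vee$ is a hypersurface exactly when $\delta(X)=0$. I would read off $\delta(X)$ from the conormal variety $\mathcal Z_X\subset\PP(W)\times\PP(W^\vee)$: it is irreducible of dimension $\dim\PP(W)-1$ and dominates $X^\vee$, so $\delta(X)$ is the dimension of the contact locus of a general tangent hyperplane. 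By the standard linearization of contact loci, this dimension equals the generic corank --- over a general point $x\in X$ and a general normal covector $\phi$ at $x$ --- of the symmetric bilinear form $\phi\circ\mathrm{II}_x$ on $T_xX$ cut out by the second fundamental form.

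Second, I would make $\mathrm{II}_x$ explicit for the Segre--Veronese variety. Writing $x=[v_1^{d_1}\otimes\cdots\otimes v_k^{d_k}]$, the tangent space splits as $T_xX=\bigoplus_{l=1}^kT_l$ with $T_l\cong V_l/\langle v_l\rangle$ of dimension $m_l$, and $\phi\circ\mathrm{II}_x$ becomes a symmetric block form $(B_{lj})_{1\le l,j\le k}$. For $l\ne j$ the block $B_{lj}$ is the mixed second derivative of $\phi$ in the $l$-th and $j$-th factors; the tangency condition (vanishing of the first derivatives of $\phi$ at $v$) makes $B_{lj}$ descend to a bilinear form on $T_l\times T_j$, and for general $\phi$ it is a general $m_l\times m_j$ matrix. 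The diagonal block $B_{ll}$ is the Hessian at $v_l$ of the degree-$d_l$ form $w\mapsto\phi(v_1^{d_1}\otimes\cdots\otimes w^{d_l}\otimes\cdots\otimes v_k^{d_k})$. Here the dichotomy of the statement appears: if $d_l=1$ this form is linear and $B_{ll}=0$; if $d_l\ge2$ the Euler relation puts $v_l$ in the kernel of the Hessian at the critical point $v_l$, so $B_{ll}$ descends to $T_l$, and for general $\phi$ it is nondegenerate, since a general form of degree $\ge2$ with a prescribed critical point has nondegenerate Hessian there modulo the Euler direction. Hence the problem reduces to computing the generic corank of a symmetric $k\times k$ block matrix whose off-diagonal blocks are general, whose diagonal blocks indexed by $J:=\{l:d_l\ge2\}$ are general (hence generically invertible), and whose diagonal blocks indexed by $I:=\{l:d_l=1\}$ are zero.

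Third, I would prove that this generic corank is $c:=\max\bigl(0,\ \max_{l\in I}(m_l-\sum_{i\ne l}m_i)\bigr)$. The lower bound is immediate: a vector supported on a single factor $l_0\in I$ lies in the kernel as soon as it is killed by the $\sum_{i\ne l_0}m_i$ generic rows coming from the blocks $B_{i,l_0}$, which already forces corank $\ge\max(0,m_{l_0}-\sum_{i\ne l_0}m_i)$. For the upper bound, if $c>0$ is attained at $l^*\in I$, I would restrict $(B_{lj})$ to $U'\oplus\bigoplus_{i\ne l^*}T_i$, where $U'\subset T_{l^*}$ is a general subspace of dimension $\sum_{i\ne l^*}m_i$: this restriction is a symmetric matrix with vanishing top-left block and a square, generically invertible off-diagonal corner, hence generically invertible, giving corank $\le c$. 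If $c=0$, then either all $k$ inequalities $m_l\le\sum_{i\ne l}m_i$ hold, and $(B_{lj})$ is generically invertible because its all-zero-diagonal specialization is the tangent-space form of the Segre variety $\PP^{m_1}\times\cdots\times\PP^{m_k}$, which is not dual defective under the triangle inequalities (existence of the hyperdeterminant); or some $l^*$ --- necessarily in $J$ --- violates $m_{l^*}\le\sum_{i\ne l^*}m_i$, in which case Schur-complementing out the invertible block $B_{l^*l^*}$ makes every remaining diagonal block generically invertible, so the reduced form is a generic symmetric matrix, again generically invertible. Combining the two bounds, $\delta(X)=c$; and $c=0$ is equivalent to $m_l\le\sum_{i\ne l}m_i$ for the largest index of $I$, which forces it for every $l\in I$, i.e. for every $l$ with $d_l=1$.

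The main obstacle is the upper bound in the third step: the linear algebra of the generic corank of this structured symmetric block matrix, especially the case $c=0$, where one must reduce --- via a suitable restriction and iterated Schur complements against the invertible Veronese diagonal blocks --- to the classical non-dual-defectivity of Segre varieties satisfying the polygon inequalities. A secondary but essential point is verifying that the diagonal blocks $B_{ll}$ with $d_l\ge2$ are genuinely nondegenerate for general $\phi$; this uses smoothness of $X$ and $\car=0$, and it is precisely what makes the Veronese factors contribute nothing to the inequality.
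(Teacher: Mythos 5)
First, note that the paper offers no proof of this statement: it is quoted verbatim from Gelfand--Kapranov--Zelevinsky (Corollary 5.11 of \cite{Gelfand}), where it is deduced from a general theorem on duals of products of arbitrary projective varieties. Your route --- identifying the dual defect with the generic corank of $\phi\circ\mathrm{II}_x$ and computing that corank for the explicit block Hessian of a multihomogeneous form at a general point of tangency --- is a different but standard and viable strategy (it is essentially how the defect of Segre products is analysed in Chapter 14 of \cite{Gelfand}). Your setup is correct: the Euler relations make the Hessian descend to $\bigoplus_l V_l/\langle v_l\rangle$; the second-order Taylor coefficients of $\phi$ at $x$ are unconstrained and independent, so the blocks are independently generic; the diagonal block vanishes exactly when $d_l=1$ and is generically nondegenerate when $d_l\ge 2$; and both your lower bound and your upper bound in the case $c>0$ (restricting to a codimension-$c$ subspace on which the form is visibly nondegenerate, so that the radical meets it trivially) are sound.

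The genuine gap is in the case $c=0$. In the subcase where some $l^*\in J$ violates the inequality, the inference that Schur-complementing out $B_{l^*l^*}$ ``makes every remaining diagonal block generically invertible, so the reduced form is a generic symmetric matrix, again generically invertible'' is not valid: the Schur complement is not a generic symmetric matrix (its blocks are algebraically dependent on the original data), and a symmetric block matrix whose diagonal blocks are invertible can perfectly well be singular, e.g.\ $\left(\begin{smallmatrix}1&2\\2&4\end{smallmatrix}\right)$. The step can be repaired --- for instance, $m_{l^*}>\sum_{i\neq l^*}m_i$ forces $\sum_{j\in J}m_j>\sum_{i\in I}m_i$, after which one exhibits a single invertible specialization of the shape $\left(\begin{smallmatrix}0&E\\E^{t}&I\end{smallmatrix}\right)$ with $E$ of full row rank and invokes semicontinuity of rank --- but as written the argument does not close. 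Separately, the subcase where all $k$ inequalities hold is resolved by citing the non-defectivity of the Segre variety under the polygon inequalities, i.e.\ the existence of the hyperdeterminant; that is exactly the instance $d_1=\dots=d_k=1$ of the statement being proved. Quoting it is legitimate if you regard the Segre case as known, but then your argument is a reduction of the Segre--Veronese case to the Segre case rather than a self-contained proof; to make it self-contained you would have to carry out the same corank computation for the all-zero-diagonal block matrix, which is the combinatorial core of the GKZ theorem.
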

  
  	The case when we have that $d_l=1$ and the equality on the triangular inequality $m_l\leq\sum_{i\neq l}m_i$ holds is called boundary format case.

  	The article is divided into three parts. Section two is a preliminaries section, where we introduce with more details the singular tuples and we give the cohomological tools that are necessary for our results. In the third section we work on the symmetric tensor case and prove Theorem \ref{fsym}. In the final section we analyse the multisymmetric tensor case and prove Theorem \ref{fmulti}.
  \section*{Acknowledgement}   
   
  The author would like to thank Giorgio Ottaviani for proposing this work and for the valuables discussions, suggestions and encouragement.
  
  This work has been supported by European Union's Horizon 2020 research and innovation programme under the Marie Sk\l odowska-Curie Actions, grant agreement 813211 (POEMA).

 	
 	\section{Preliminaries}
 	
 	\subsection{Eigentensors and singular tuples}
 	We suggest both \cite{landsbergtensors} and \cite{qi2017tensor} as references for a deeper understanding of the notions presented in this section.
 	
 	\begin{definition}
	 		Suppose $f\in\Sym^dV$ is a symmetric tensor, i.e., a homogenous polynomial and $q$ a quadratic form on $V$, we will assume in this paper that, after some change basis of $V$, it is given in coordinates by $q=x_0^2+\dots+x_m^2$, the eigenvectors of $f$ are defined in \cite{Qi} as the vectors $x\in V$ such that $$f(x^{d-1})=\lambda x,$$ for $\lambda\in \CC$. In other words, we can define the eigentensors of $f$ as the fixed points of $\nabla f=\big(\frac{\partial f}{\partial x_0}(x),\dots,\frac{\partial f}{\partial x_m} (x)\big)$, the gradient vector, i.e. the solutions of the equation $$
 		\nabla f(x)=\lambda x.
 		$$
 		In conclusion, the equations of the locus of eigentensors $Eig(f)$ is determined by the $2\times2$-minors of the matrix
 		$$
 		\begin{bmatrix}
 		\frac{\partial f}{\partial x_0}&\dots&\frac{\partial f}{\partial x_m}\\
 		x_0&\dots&x_m
 		\end{bmatrix}.
 		$$ 
 	\end{definition}
 	
 	\begin{definition}
 		Let $T\in \Sym^{d_1}V_1\otimes\dots\otimes \Sym^{d_k}V_k$ be a tensor and $q_i=x_0^2+\dots+x_{m_i}^2$ a quadratic form on $V_i$ for a choice of basis for each $V_i$. We can define the singular tuples of $T$ as the tuple $(v_1,\dots,v_n)$, such that each flattening $$T_i:\Sym^{d_1}V_1\otimes\dots\otimes \Sym^{d_i-1}{V_i}\otimes\dots\otimes \Sym^{d_k}V_k\rightarrow V_i 
 		$$
 		satisfies $$T_i(v_1^{d_1}\otimes\dots\otimes v_i^{d_i-1}\otimes\dots\otimes v_k^{d_k})=\lambda v_i.$$
 		
 		We define the zero dimensional scheme $Eig(T)$ to be the locus of singular tuples of $T$.
 	\end{definition}
 	
 	
 	
 	\begin{Example}\label{Matrix}
 
	The relation between a given set of singular tuples and the matrices that have such singular tuple locus configuration is described by the Singular Value Decomposition, since the singular tuples of a matrix are given by the first columns of the orthogonal matrices on the decomposition. We briefly describe it next.

	Let $A\in \Hom( Y,W)$, where $Y,\  W$ are vector spaces of dimensions $\dim Y=n,\ \dim{W}=m$, we recall that the singular value decomposition tells us that $A=U\diag(\sigma_1,\dots,\sigma_{	\min\{m,n\}})V^t$. If we let $u_i$ and $v_i$ be the columns of $U$ and $V$, as described by Ottaviani and Paoletti in \cite{Paoletti}, we have that for $1\leq i\leq m=\min\{m,n\}$, $Av_i=u_i$ and $A^tu_i=v_i$, in other words, the pairs $(u_i,v_i)$ are the singular pairs of $A$. Let $\tau:\Hom(Y,{W})\rightarrow (Y\times {W})^{(m)},\ A\mapsto Eig(A),$ where $Eig(A)$ is the set consisting of the singular tuples of $A$. Therefore, given a singular tuple locus $Z=\{(u_i,v_i)\}_{i=1}^{m}$, and orthogonal matrices $U,V$ such that the first $m$ columns are $u_i$ and $v_i$ we have that $$
	\tau^{-1}(Z)=\{B\in \Hom(Y,{W})|B=U\diag(\sigma_1,\dots,\sigma_m)V^t, \sigma_i\in\CC \}.
	$$
 	\end{Example}

 	 Let $V_1,\dots,V_k$ be vector spaces of respective dimension $m_1+1,\dots,m_k+1$, and let $T\in \Sym^{d_1}V_1\otimes \dots \otimes\Sym^{d_k}V_k$.
 	 
 	 \begin{definition}\label{E}
 	 	Let $X=\PP V_1\times \dots\times \PP V_k$ be the Segre-Veronese variety of rank $1$ tensors embedded with $\OO(d_1,\dots,d_k)$ in $\PP(\Sym^{d_1}V_1\otimes \dots\otimes \Sym^{d_k}V_k)$. Let $\pi_l:X\rightarrow \PP V_l$ be the projection on the $l$-th component, and let $Q_l$ be the quotient bundle, whose fibers over a point $v_l\in V_l$ are $V_l/\langle v_l\rangle$. Let $\mathcal E_l=\pi_l^\ast Q_l\otimes \OO(d_1,\dots,d_{l}-1,\dots,d_l) $, we can construct the vector bundle $$
 	 \mathcal E=\bigoplus_{l=1}^k \mathcal E_l.
 	 $$
 	 A tensor $T\in\PP(\Sym^{d_1}V_1\otimes \dots\otimes \Sym^{d_k}V_k)$ leads to a global section of $\mathcal E_l$ which over a point $v=(v_1,\dots,v_k)$ is the map sending $v$ to the natural pairing of $f$ with $(v_1^{d_1})\cdots (v_l^{d_l-1})\cdots (v_k^{d_k})$ modulo $\langle v_l\rangle $, that is a vector in $V_l/\langle v_l\rangle$. 
 	 \end{definition}
 	
 	The reason to consider this particular bundle is that in \cite{Draisma} and \cite{Friedland} it is proven that if we consider the section $s_T$ associated to a multisymmetric tensor $T$, then the zero locus $Z(s_T)$ is equal to the locus of singular tuples of $T$, that is $Z(s_T)=Eig(T)$. In particular Friedland and Ottaviani \cite{Friedland} used this fact to compute the number of singular tuples of a general tensor as the top Chern class of the bundle $\mathcal E$ in Theorem \ref{Friedland}.
 	\begin{definition}
 		
 	The $ED$-degree of a subvariety $X\subset \PP(\Sym^{d_1}V_1\otimes\dots\otimes \Sym^{d_k}V_k)$ is defined as the number of critical points of the function $d_T:X\rightarrow \mathbb R$, where $d_T$ is the distance function between a general tensor $T\in \PP(\Sym^{d_1}V_1\otimes\dots\otimes \Sym^{d_k}V_k)$ and $X$. We consider the natural extension of this function to the complex numbers for our purposes. 
 	
 	\end{definition}
 	The $ED$-degree has been studied in \cite{Horobet}, and we suggest it as a reference for a better comprehension. In particular, if we consider the variety $X$ to be the Segre-Veronese variety, we have that the $ED$-degree counts the number of singular tuples of a general tensor. We are going to denote the $ED$-degree of the Segre-Veronese variety by $ed_X$. This particular $ED$-degree has been studied before in \cite{Friedland}, where the next theorem is presented.
 	
 	\begin{Teorema}[\cite{Friedland}, Theorem $15$]\label{Friedland}
 		Let $V_1,\dots,V_k$ be vector spaces of dimension $m_1+1,\dots,m_k+1$. The number of singular tuples of a general tensor $T\in \PP(\Sym^{d_1}V_1\otimes\dots\otimes\Sym^{d_k} V_k)$, is equal to the coefficient of $t_1^{m_1}\cdots t_k^{m_k}$ in the polynomial $$
 		\prod_{l=1}^{k}\frac{\hat{t_l}^{m_l+1}-t_l^{m_l+1}}{\hat{t_l}-t_l}
 		$$
 		where $\hat t_l=(\sum_{i}^kd_it_i) -t_l$.
 		
 	\end{Teorema}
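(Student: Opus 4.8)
The plan is to recognise the number of singular tuples of a general $T$ as the degree of the top Chern class of the bundle $\mathcal E=\bigoplus_l\mathcal E_l$ of Definition \ref{E}, and then to evaluate that Chern number by a splitting-principle computation on $X=\PP V_1\times\dots\times\PP V_k$. Indeed, since $Z(s_T)=Eig(T)$ for every $T$, and $\rk\mathcal E=\sum_l m_l=\dim X$, if for general $T$ the section $s_T$ meets the zero section transversally then $Z(s_T)$ is smooth of dimension $0$, hence reduced, and its cardinality is exactly $\int_X c_{\mathrm{top}}(\mathcal E)$. So the first step is to check genericity: one verifies that the sections $s_T$, as $T$ ranges over $\Sym^{d_1}V_1\otimes\dots\otimes\Sym^{d_k}V_k$, globally generate $\mathcal E$ — at a point $v=(v_1,\dots,v_k)$ one realises an arbitrary element of $\bigoplus_l V_l/\langle v_l\rangle$ by a suitable (e.g. rank-one, symmetrised) tensor acting through the $l$-th flattening — and then a Kleiman/Bertini transversality theorem gives the required smoothness of $Z(s_T)$ for general $T$.

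Next I would compute $c(\mathcal E_l)$ in $A^*(X)=\ZZ[h_1,\dots,h_k]/(h_1^{m_1+1},\dots,h_k^{m_k+1})$, where $h_j=\pi_j^*c_1(\OO_{\PP V_j}(1))$. Pulling back the tautological sequence $0\to\OO_{\PP V_l}(-1)\to V_l\otimes\OO\to Q_l\to 0$ along $\pi_l$ and twisting by the line bundle with first Chern class $\ell_l:=\big(\sum_j d_j h_j\big)-h_l$ (this is $c_1$ of $\OO(d_1,\dots,d_l-1,\dots,d_k)$), one gets a short exact sequence
$$0\longrightarrow \OO_X(\ell_l-h_l)\longrightarrow V_l\otimes\OO_X(\ell_l)\longrightarrow \mathcal E_l\longrightarrow 0,$$
so that $c(\mathcal E_l)=\dfrac{(1+\ell_l)^{m_l+1}}{1+\ell_l-h_l}=\sum_{j\ge0}h_l^{\,j}(1+\ell_l)^{m_l-j}$. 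Extracting the part of cohomological degree $m_l$ gives $c_{m_l}(\mathcal E_l)=\sum_{j=0}^{m_l}h_l^{\,j}\ell_l^{\,m_l-j}$, i.e. the polynomial $\dfrac{\ell_l^{\,m_l+1}-h_l^{\,m_l+1}}{\ell_l-h_l}$; notice $\ell_l$ is precisely $\hat t_l$ evaluated at $t_j=h_j$.

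Then I would assemble the pieces: since $\rk\mathcal E_l=m_l$ and $\sum_l m_l=\dim X$, the only contribution to top degree in $c(\mathcal E)=\prod_l c(\mathcal E_l)$ comes from $\prod_l c_{m_l}(\mathcal E_l)$, hence
$$c_{\mathrm{top}}(\mathcal E)=\prod_{l=1}^k\frac{\ell_l^{\,m_l+1}-h_l^{\,m_l+1}}{\ell_l-h_l}.$$
Integrating over $X$ amounts to reading off the coefficient of $h_1^{m_1}\cdots h_k^{m_k}$; because every monomial occurring there has each $h_j$-exponent at most $m_j$, this coefficient is not affected by the relations $h_j^{m_j+1}=0$, so it equals the coefficient of $t_1^{m_1}\cdots t_k^{m_k}$ in $\prod_l\frac{\hat t_l^{\,m_l+1}-t_l^{\,m_l+1}}{\hat t_l-t_l}$, which is the asserted formula.

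I expect the main obstacle to be the first step, not the Chern-class bookkeeping: one has to be certain that for a general tensor the eigenscheme really is reduced and zero-dimensional, so that the Chern number counts genuine singular tuples rather than a purely virtual intersection number. This is exactly the point where global generation of $\mathcal E$ together with a transversality statement (and, in the boundary-format situations where some $d_l=1$, a little extra care about when the section is regular) is needed; once that is in place, the remaining computation is the routine manipulation of the exact sequences and binomial expansions sketched above.
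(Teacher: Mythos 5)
The paper states this result without proof, citing Friedland--Ottaviani, and your argument is precisely the one used there: identify the count with $\int_X c_{\mathrm{top}}(\mathcal E)$ after a global-generation/Bertini argument guaranteeing that $Z(s_T)$ is reduced and zero-dimensional for general $T$, then compute $c_{m_l}(\mathcal E_l)=\sum_{j=0}^{m_l}h_l^{\,j}\ell_l^{\,m_l-j}$ from the twisted tautological sequence and read off the coefficient of $h_1^{m_1}\cdots h_k^{m_k}$. The Chern-class bookkeeping is correct, and you rightly identify the transversality step as the only genuinely non-routine part.
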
  

	\begin{Example}
		Assume that $d_i=m_i=1$, for all $i=1,\dots,k$, in this setting we can compute using the previous formula that the $ED$-degree of the Segre variety $X$ is $k!$.
	\end{Example}
	\begin{Example}
		Furthermore, the number of singular tuples stabilises at the boundary format case, that is, suppose that $d_k=1$ and $N=\sum_{i=1}^{k-1}m_i\leq m_k$, then the number of singular tuples of a general tensor is constant as $m_k$ increases.
	\end{Example}

	Consider the map $$\tau:\PP\big(\Sym^{d_1}V_1\otimes\dots\otimes \Sym^{d_k}V_k\big)\dashrightarrow \big[\PP(V_1)\times\dots\times \PP(V_k)\big]^{(ed_X)}$$
	that for a general tensor $T\in \Sym^{d_1}V_1\otimes\dots\otimes \Sym^{d_k}V_k$ it associates its singular tuples locus $Eig(T)$. Studying this map is difficult in general, but decomposing it through the bundle $\mathcal E$ is advantageous. We decompose $\tau$ in the following manner $$
	\begin{tikzcd}[column sep=scriptsize]\PP\big(\Sym^{d_1}V_1\otimes\dots\otimes\Sym^{d_k}V_k\big)\arrow[dashed,dr,"\varphi"]\arrow[dashed,rr,"\tau"]{}& & \big[\PP(V_1)\times\dots\times \PP(V_k)\big]^{(ed_X)} \\& \PP(H^0(\mathcal E))\arrow[dashed,ru,"\psi"]\end{tikzcd}	
	$$
	In this diagram the map $\varphi$ associates a tensor $T$ to the global section $s_T$ described before in the definition \ref{E}. The map $\psi$ sends a global section $s\in H^0(\mathcal E)$ to its zero locus $Z(s)$, in particular the codomain is well defined for a section $s_T$ when the singular tuples of $T$ consists of exactly $ed_X$ points. 
	\begin{Example}
		Notice that for $k=1$ we obtain the symmetric tensor case, in such case $\mathcal E$ is simply equal to $Q(d-1)$ and the map $\varphi$ can be described as $$
		\varphi:\Sym^d V\rightarrow H^0(Q(d-1)),\ f\mapsto s_f=\begin{bmatrix}
		\nabla f(x)\\
		x
		\end{bmatrix}
		$$
		
		The other interesting case is when $d_l=1$ for all $l=1,\dots,k$. In such case, $X$ is the Segre variety and the map $\varphi$ can be described by means of the flattenings of the tensor $T$, that is $$
		\varphi: V_1\otimes \dots\otimes V_k\rightarrow \bigoplus_{l=1}^k\Hom(V_1^\ast\otimes\dots\otimes\widehat{V_l^\ast}\otimes \dots\otimes V_k^\ast,V_l),\ T\mapsto (T_1,\dots,T_k);
		$$
		In our notation $T_l$ represents the $l$-flattening of T, namely $$
		T_l:V_1\otimes\dots\otimes \hat V_l\otimes\dots\otimes V_k\rightarrow V_l^\ast.
		$$
		
		In the general case $X$ is the Segre-Veronese variety; this is the case of multisymmetric tensors. The map $\varphi$ in this case is a combination of the previous two, that is, in each $l$-th component the maps acts as the contraction in the $l$-th coordinate and the evaluation in the others. 
	\end{Example}

 	\subsection{Cohomological ingredients}
 	
 	We recall the next classical concepts and results that will be utilised in the course of this article, we suggest \cite{Weyman} for more details.
 	
 	\begin{teorema}[K\"unneth's formula]
 		Let $\mathcal B_i$ be vector bundles on $\PP V_i$, $i=1,\dots,k$ and $q$ a non-negative integer, then $$
 		H^q\bigg(\bigotimes_{i=1}^k \pi_i^\ast \mathcal B_i\bigg)\cong \bigoplus_{q_1+\dots+q_k=q}\bigotimes_i H^{q_i}(\mathcal B_i).
 		$$
 		where the sum goes over all tuples of non-negative integers summing $q$.
 		
 	\end{teorema}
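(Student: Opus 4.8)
The plan is to reduce to the case of two factors by induction and then run the classical double-complex argument for a product of two varieties; the essential point is that over the field $\CC$ there are no $\mathrm{Tor}$ corrections. For the inductive step group the factors as $\PP V_1\times\cdots\times\PP V_k=Z\times\PP V_k$ with $Z=\PP V_1\times\cdots\times\PP V_{k-1}$, and write $p\colon Z\times\PP V_k\to Z$ and $q\colon Z\times\PP V_k\to\PP V_k$ for the two projections, $\rho_i\colon Z\to\PP V_i$ for the projections of $Z$. Since $\pi_i=\rho_i\circ p$ for $i\le k-1$ and $\pi_k=q$, one has $\bigotimes_{i=1}^k\pi_i^\ast\mathcal B_i\cong p^\ast\big(\bigotimes_{i=1}^{k-1}\rho_i^\ast\mathcal B_i\big)\otimes q^\ast\mathcal B_k$, where $\bigotimes_{i=1}^{k-1}\rho_i^\ast\mathcal B_i$ is again a vector bundle on $Z$ of the shape covered by the inductive hypothesis. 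Hence the two-factor case together with the inductive hypothesis on $Z$ gives the statement for $k$ after reindexing the direct sum, and it suffices to treat a product $X\times Y$ of two projective spaces with arbitrary vector bundles $\mathcal F$ on $X$ and $\mathcal G$ on $Y$.

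For that, let $p\colon X\times Y\to X$ and $q\colon X\times Y\to Y$ be the projections and choose finite affine open covers $\mathcal U=\{U_a\}$ of $X$ and $\mathcal W=\{W_b\}$ of $Y$, e.g.\ by standard coordinate charts. Then $\mathcal U\times\mathcal W=\{U_a\times W_b\}$ is a finite affine open cover of $X\times Y$ all of whose finite intersections are again affine, so its \v{C}ech complex computes $H^\bullet(X\times Y,\,p^\ast\mathcal F\otimes q^\ast\mathcal G)$, as the sheaf is quasi-coherent. The key local input is that for affine opens $U=\Spec A\subset X$, $W=\Spec B\subset Y$ with $\mathcal F|_U=\widetilde M$ and $\mathcal G|_W=\widetilde N$, one has $\Gamma\big(U\times W,\,p^\ast\mathcal F\otimes q^\ast\mathcal G\big)\cong M\otimes_\CC N$ as an $A\otimes_\CC B$-module. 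Feeding this into the \v{C}ech complexes identifies $\check{\mathcal C}^\bullet(\mathcal U\times\mathcal W,\,p^\ast\mathcal F\otimes q^\ast\mathcal G)$, up to a natural quasi-isomorphism, with the total complex of the double complex $\check{\mathcal C}^\bullet(\mathcal U,\mathcal F)\otimes_\CC\check{\mathcal C}^\bullet(\mathcal W,\mathcal G)$.

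It remains to invoke the algebraic K\"unneth theorem over a field: for bounded complexes $A^\bullet$ and $B^\bullet$ of $\CC$-vector spaces, $H^n(A^\bullet\otimes_\CC B^\bullet)\cong\bigoplus_{i+j=n}H^i(A^\bullet)\otimes_\CC H^j(B^\bullet)$, with no correction term because over a field every complex is homotopy equivalent to its cohomology endowed with the zero differential and $\otimes_\CC$ preserves homotopy equivalences. Applying this with $A^\bullet=\check{\mathcal C}^\bullet(\mathcal U,\mathcal F)$ and $B^\bullet=\check{\mathcal C}^\bullet(\mathcal W,\mathcal G)$, whose cohomologies are $H^\bullet(X,\mathcal F)$ and $H^\bullet(Y,\mathcal G)$ since $\mathcal U,\mathcal W$ are affine covers, yields $H^n(X\times Y,\,p^\ast\mathcal F\otimes q^\ast\mathcal G)\cong\bigoplus_{i+j=n}H^i(X,\mathcal F)\otimes_\CC H^j(Y,\mathcal G)$, which is what we want.

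The step I expect to be the main obstacle is the identification in the second paragraph: the \v{C}ech complex of the product cover is indexed by all finite subsets of the combined index set, whereas the tensor-product double complex only registers the ``rectangular'' ones, so the comparison is not an equality on the nose and must be realized through an Eilenberg--Zilber-type quasi-isomorphism, or by comparing the two spectral sequences of the double complex (the one filtered by $X$-degree collapsing at the $E_2$ page over $\CC$). One can sidestep this entirely using a little derived-category language: since $\mathcal F$ is locally free the projection formula gives $Rp_\ast(p^\ast\mathcal F\otimes q^\ast\mathcal G)\cong\mathcal F\otimes Rp_\ast q^\ast\mathcal G$; flat base change along $X\to\Spec\CC$ gives $Rp_\ast q^\ast\mathcal G\cong\OO_X\otimes_\CC R\Gamma(Y,\mathcal G)$; and over $\CC$ one has $R\Gamma(Y,\mathcal G)\simeq\bigoplus_j H^j(Y,\mathcal G)[-j]$, so applying $R\Gamma(X,-)$ and reading off cohomology in each degree produces the formula directly.
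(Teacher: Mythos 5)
Your argument is correct, but note that the paper itself offers no proof of this statement: it is quoted as a classical fact in the ``Cohomological ingredients'' subsection with a pointer to the literature, so there is no internal argument to compare yours against. What you supply is the standard self-contained derivation, and it holds up: the reduction to two factors by grouping $\PP V_1\times\cdots\times\PP V_{k-1}$ is legitimate because pullback commutes with tensor products of locally free sheaves, the product of two affine charts is affine and so are all finite intersections (by separatedness), and the local computation $\Gamma(U\times W, p^\ast\mathcal F\otimes q^\ast\mathcal G)\cong M\otimes_\CC N$ is exactly right since $U\times W=\Spec(A\otimes_\CC B)$. You also correctly flag the only genuinely delicate step, namely that the \v{C}ech complex of the product cover is not literally the tensor-product double complex, and your two proposed fixes are both sound; the derived-category route (projection formula, flat base change along $X\to\Spec\CC$, and the splitting of $R\Gamma(Y,\mathcal G)$ over a field) is the cleanest and avoids the Eilenberg--Zilber comparison entirely. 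The absence of $\mathrm{Tor}$ corrections over $\CC$ is the reason the formula takes the simple direct-sum form used throughout the paper (e.g.\ in Lemma \ref{end0}), so your emphasis on that point is well placed. The only cosmetic issue is the reuse of the letter $q$ both for the cohomological degree in the statement and for the second projection in your inductive step; rename one of them.
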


Let $G$ be a semisimple simply connected group, let $P\subset G$ be a parabolic subgroup. Let $\Phi^+$ be the set of positive roots of $G$. Let $\delta=\sum \lambda_i$ be the sum of all the fundamental weights and let $\lambda$ be a weight. Let $E_\lambda$ be the homogeneous bundle arising from the irreducible representation of $P$ with highest weight $\lambda$ and $(\ ,\ )$ be the Killing form.
 
 \begin{definition}
 	The weight $\lambda$ is called singular if there exists a root $\alpha \in \Phi^+$ such that $(\lambda,\alpha)=0$. Otherwise, if $(\lambda,\alpha)\neq 0$ for all the roots $\alpha \in \Phi^+$, we say that $\lambda$ is regular of index $p$ if there exists exactly $p$ roots $\alpha_1,\dots,\alpha_p\in\Phi^+$ such that $(\lambda,\alpha)<0$.
 \end{definition}

 \begin{teorema}[Bott]
 		  If $\delta+\lambda$ is singular, then $H^i(G/P, E_\lambda)=0$ for all $i$.
 		 If $\delta+\lambda$ is regular of index $p$, then $H^i(G/P,E_\lambda)=0$ for $i\neq p$.

 \end{teorema}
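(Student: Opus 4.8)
The statement is the vanishing half of the Borel--Weil--Bott theorem, and the plan is its classical proof: reduce from the partial flag variety $G/P$ to the full flag variety $G/B$, pass there to line bundles, and run the $\PP^1$-bundle (``rank one'') induction of Demazure along a reduced word in the Weyl group $W$. Write $\langle\ ,\ \rangle$ for the pairing of weights with coroots, recalling that $(\lambda,\alpha)=0$ if and only if $\langle\lambda,\alpha^\vee\rangle=0$, and that $\langle\delta,\alpha^\vee\rangle=1$ for every simple root $\alpha$. \emph{First, the reduction to $G/B$.} An irreducible representation of $P$ is an irreducible representation of its Levi factor $L$ inflated along $P\twoheadrightarrow L$, so its highest weight $\lambda$ is $L$-dominant. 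Let $\pi\colon G/B\to G/P$ be the canonical fibration, whose fibre is $L/B_L$ with $B_L=B\cap L$, and let $\mathcal L_\mu$ be the line bundle on $G/B$ attached to a $B$-character $\mu$. Borel--Weil applied to the reductive group $L$ gives that $H^0(L/B_L,\mathcal L_\lambda)$ is the irreducible $L$-module of highest weight $\lambda$ and that $H^j(L/B_L,\mathcal L_\lambda)=0$ for $j>0$; by $G$-equivariant base change, $\pi_*\mathcal L_\lambda\cong E_\lambda$ and $R^q\pi_*\mathcal L_\lambda=0$ for $q>0$, so the Leray spectral sequence collapses to $H^i(G/P,E_\lambda)\cong H^i(G/B,\mathcal L_\lambda)$ for all $i$. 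As singularity and the index depend only on the weight $\delta+\lambda$, it suffices to prove both vanishing assertions for the line bundles $\mathcal L_\lambda$ on $G/B$.

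\emph{Second, the $\PP^1$-bundle lemma.} For a simple root $\alpha$ let $P_\alpha\supset B$ be the minimal parabolic and $\pi_\alpha\colon G/B\to G/P_\alpha$ the associated $\PP^1$-bundle. Along each fibre $\mathcal L_\mu$ restricts to $\mathcal O_{\PP^1}(n-1)$ with $n=\langle\delta+\mu,\alpha^\vee\rangle$. Computing $R^\bullet\pi_{\alpha*}\mathcal L_\mu$ fibrewise --- the elementary cohomology of line bundles on $\PP^1$, organised through the relative Euler sequence of the $\PP^1$-bundle --- gives two alternatives: (i) if $n=0$ then $R^\bullet\pi_{\alpha*}\mathcal L_\mu=0$, hence $H^\bullet(G/B,\mathcal L_\mu)=0$; (ii) if $n<0$ then there is an isomorphism $H^i(G/B,\mathcal L_\mu)\cong H^{i-1}(G/B,\mathcal L_{s_\alpha\cdot\mu})$ for every $i$, where $s_\alpha\cdot\mu:=s_\alpha(\delta+\mu)-\delta$ denotes the dotted reflection, which satisfies $\langle\delta+s_\alpha\cdot\mu,\alpha^\vee\rangle=-n>0$. (The case $n>0$ is (ii) with the roles of $\mu$ and $s_\alpha\cdot\mu$ exchanged.)

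\emph{Third, sweeping into the dominant chamber and concluding.} Set $\nu:=\delta+\lambda$ and pick $w\in W$ of minimal length with $w\nu$ dominant; fix a reduced word $w=s_{\beta_p}\cdots s_{\beta_1}$, and put $\nu_0=\nu$, $\nu_j=s_{\beta_j}\nu_{j-1}$, $\mu_j=\nu_j-\delta$, so that $\mu_0=\lambda$, $\mu_j=s_{\beta_j}\cdot\mu_{j-1}$, and $\nu_p=w\nu$ is dominant. A standard property of reduced words (a shortest gallery from the dominant chamber to $w^{-1}$ of it crosses each wall once, always from the negative side) shows $\langle\nu_{j-1},\beta_j^\vee\rangle<0$ at each step; by Step 2(ii) this yields $H^i(G/B,\mathcal L_\lambda)\cong H^{i-p}(G/B,\mathcal L_{\mu_p})$ for all $i$, with $\delta+\mu_p=w\nu$ dominant. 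If $\delta+\lambda$ is singular, so is $w\nu$, and being dominant it lies on a wall, $\langle\delta+\mu_p,\beta^\vee\rangle=0$ for some simple $\beta$; then Step 2(i) gives $H^\bullet(G/B,\mathcal L_{\mu_p})=0$ and hence $H^i(G/B,\mathcal L_\lambda)=0$ for all $i$. If $\delta+\lambda$ is regular of index $p$, then $w\nu$ is dominant and regular, so $\mu_p$ is a genuine dominant weight; Kempf's vanishing theorem (equivalently, Borel--Weil together with Kodaira vanishing in the ample case and a limiting argument along the walls) gives $H^j(G/B,\mathcal L_{\mu_p})=0$ for $j\neq 0$, so $H^i(G/B,\mathcal L_\lambda)=0$ for $i\neq p$. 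Finally $p$ is indeed the index of $\delta+\lambda$: since $w\nu$ is dominant regular, $(\delta+\lambda,\alpha)<0\iff w\alpha\in\Phi^-$ for $\alpha\in\Phi^+$, and $\#\{\alpha\in\Phi^+:w\alpha\in\Phi^-\}=\ell(w^{-1})=\ell(w)=p$.

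The main obstacle is Step 2: making the degree-shift isomorphism precise requires a careful study of $P_\alpha$-equivariant sheaves on the $\PP^1$-bundle $G/B\to G/P_\alpha$ and of the resulting Leray spectral sequence; the reduced-word fact in Step 3 and the Kempf vanishing input are standard but not formal. Over $\CC$ one may instead follow Bott's original route, identifying $H^i(G/B,\mathcal L_\lambda)$ with the space of $\mathcal L_\lambda$-valued harmonic $(0,i)$-forms on the compact K\"ahler manifold $G/B$ and invoking a homogeneous Bochner--Kodaira--Nakano curvature identity to force concentration in the single cohomological degree prescribed by the sign pattern of $(\delta+\lambda,\alpha)$, $\alpha\in\Phi^+$.
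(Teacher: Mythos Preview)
Your outline is the classical Demazure proof of Borel--Weil--Bott and is correct as a sketch: the reduction $H^i(G/P,E_\lambda)\cong H^i(G/B,\mathcal L_\lambda)$ via Leray, the $\PP^1$-bundle step for minimal parabolics giving the degree shift under dotted simple reflections, and the reduced-word induction into the dominant chamber followed by Kempf vanishing are exactly the standard ingredients. The identification of the index with $\ell(w)$ at the end is also right.

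However, note that the paper does not prove this theorem at all: it is stated in the preliminaries as a classical result (with \cite{Weyman} as the suggested reference) and is used as a black box in Lemma~\ref{bott} and the surrounding computations. So there is no ``paper's own proof'' to compare against; you have supplied a proof where the paper simply quotes the statement. Your argument is more than what is required here, but it is a faithful account of the standard proof.
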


 	\section{Symmetric Tensors}
 	\begin{lema}\label{symmetric}
 		Let $V$ be a vector space of dimension $m+1$, $q=x_0^2+\dots+x_m^2$ a quadratic form on $V$, and $d$ a positive integer. If $d$ is odd, the map $\varphi:\Sym^d V\rightarrow H^0(Q(d-1))$ is injective. If $d$ is even, $\varphi$ has a $1$-dimensional kernel, namely, $\ker\varphi=\langle q^{d/2} \rangle$.
 	\end{lema}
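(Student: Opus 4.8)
The plan is to compute $\ker\varphi$ directly from the gradient and Euler's identity, and show it equals $\langle q^{d/2}\rangle$ when $d$ is even and is trivial when $d$ is odd (injectivity being the same as triviality of the kernel). First I would reinterpret membership in the kernel: since $s_f$ vanishes at $[x]$ exactly when $\nabla f(x)$ lies in the line $\langle x\rangle$, we have $f\in\ker\varphi$ iff $\nabla f$ is proportional to $x$ at every point, i.e. iff the $2\times 2$ minors $x_i\,\partial_j f-x_j\,\partial_i f$ vanish identically; equivalently, in the twisted Euler sequence $0\to\OO(d-2)\to V\otimes\OO(d-1)\to Q(d-1)\to 0$ on $\PP V$, the tuple $\nabla f=(\partial_0 f,\dots,\partial_m f)$ must lie in the image of the map $g\mapsto(x_0 g,\dots,x_m g)$ on global sections. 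A short divisibility argument in the UFD $\CC[x_0,\dots,x_m]$ — using that distinct coordinates are coprime and that at least two variables are present (so $m\geq 1$; for $m=0$ the bundle $Q$ is zero and there is nothing to prove) — shows this is equivalent to the existence of a single homogeneous polynomial $g$ of degree $d-2$ with $\partial_i f=x_i\,g$ for all $i$. This first step is the only delicate point; everything afterwards is bookkeeping governed by the parity of $d$.

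Next I would extract a descending chain. Euler's identity $\sum_i x_i\,\partial_i f=d\,f$ combined with $\partial_i f=x_i g$ gives $d\,f=q\,g$, in particular $q\mid f$. Differentiating the relations $\partial_i f=x_i g$ and using that mixed partials commute yields $x_i\,\partial_j g=x_j\,\partial_i g$ for all $i,j$, so $g$ satisfies the same hypothesis as $f$ and the previous step applies to it as well; iterating produces a chain $f=f_0,f_1,f_2,\dots$ with $\deg f_j=d-2j$, $\partial_i f_j=x_i\,f_{j+1}$, and, by Euler, $(d-2j)\,f_j=q\,f_{j+1}$ for each $j$ with $\deg f_j\geq 0$.

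Finally I would run the chain according to the parity of $d$. If $d=2e$ is even, then $d-2j=2(e-j)\neq 0$ for $0\leq j\leq e-1$, so $f_j=\tfrac1{2(e-j)}\,q\,f_{j+1}$, whence $f_0$ is a scalar multiple of $q^{e}f_e$ with $f_e$ a constant (it has degree $0$), giving $f\in\langle q^{d/2}\rangle$; since conversely $\nabla(q^{d/2})=d\,q^{d/2-1}\,x$ is proportional to $x$, we conclude $\ker\varphi=\langle q^{d/2}\rangle$, which is one-dimensional. If $d$ is odd, take $j=\tfrac{d-1}2$ so that $f_j$ is linear; then $f_{j+1}$ would have degree $-1$ and hence vanishes, forcing $\partial_i f_j=0$ and $f_j=0$, and climbing back up via $(d-2i)\,f_i=q\,f_{i+1}$ — whose coefficients $d-2i$ are all nonzero for $0\leq i\leq j-1$ — we obtain $f=f_0=0$, i.e. $\varphi$ is injective.
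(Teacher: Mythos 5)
Your proof is correct, but it takes a genuinely different route from the paper. The paper decomposes $\Sym^dV$ into the irreducible $SO(V)$-summands $H_{d-2j}$ of harmonic polynomials times powers of $q$, invokes Schur's lemma to conclude that $\varphi$ restricted to each summand is either zero or injective, and then tests one explicit element $(x_0+ix_1)^{d-2j}q^j$ per summand (the minor computation you also perform, but only for these witnesses). You instead characterize the kernel directly: vanishing of the minors plus coprimality of the coordinates gives $\partial_i f = x_ig$ for a single $g$, Euler's identity gives $d\,f = qg$, and commuting mixed partials propagates the condition down a chain $f_j$ with $(d-2j)f_j = qf_{j+1}$, which terminates differently according to the parity of $d$. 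Both arguments are sound; yours is more elementary and self-contained (no harmonic decomposition or Schur's lemma needed, and it works verbatim in any characteristic not dividing the integers $d-2j$), while the paper's has the advantage that the Schur-lemma mechanism transfers immediately to the multisymmetric case (Theorem \ref{segrever}), where the kernel must again be a sum of irreducible summands — your inductive chain would need to be redone coordinate by coordinate there. Two small points: your parenthetical that for $m=0$ "there is nothing to prove" is not quite accurate — for $m=0$ and $d$ odd the statement as literally written fails, since $\varphi$ is the zero map on a one-dimensional space; both your proof and the paper's implicitly assume $m\geq 1$, which is the only case of interest. Also, when $f_j$ is linear you should note explicitly (as you essentially do) that the divisibility step forces the constant partials $\partial_i f_j$ to be divisible by $x_i$, hence zero; this is where the odd case collapses.
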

 \begin{proof}
 		
 	We recall that $\Sym^dV$ splits as $SO(V)$-modules as $$
 	\Sym^dV=H_d\oplus H_{d-2}\oplus \dots\oplus
 	\renewcommand\arraystretch{1.25}
 	\left\{
 	\begin{array}{llll}
 	H_1 \text{ if $d$ is odd} \\
 	H_0 \text{ if $d$ is even},
 	\end{array}
 	\right.
 	$$
 	where $H_{d-2j}=\{fq^{j}|f\text{ is a harmonic polynomial of degree }d-2j\}$ is an irreducible $SO(V)$-module.
 	
 	Therefore we can restrict $\varphi$ to each $H_j$, in such way we have $$\varphi:H_j\rightarrow W_j\subset H^0(Q(d-1)),$$ where $W_j=\im(\varphi)|_{H_j}$. This map is either an isomorphism or zero by Schur's lemma. Let $j$ be such that $d-2j\geq 1$, then we have that for $g=(x_0+ix_1)^{d-2j}q^{j}\in H_{d-2j}$ it is mapped by $\varphi$ to $$s_g=\begin{bmatrix}
 	\frac{\partial g}{\partial x_0}&\dots&\frac{\partial g}{\partial x_m}\\
 	x_0&\dots&x_m
 	\end{bmatrix}$$ that has not rank $1$ everywhere. Indeed $$
 	\frac{\partial g}{\partial x_0} x_1-\frac{\partial g}{\partial x_1} x_0 = \big((d-2j)(x_0+ix_1)^{d-2j-1}\big)\big(x_1-ix_0\big)q^j\not\equiv 0.
 	$$
 	On the other hand, $H_0=\{\lambda q^{\frac{d}{2}}|\lambda\in\CC\}$. In such case we have for an element of $H_0$ that $$
 	\frac{\partial \lambda q^{\frac{d}{2}}}{\partial x_i}x_j-\frac{\partial \lambda q^{\frac{d}{2}}}{\partial x_j}x_i=\lambda(2x_ix_jq^{\frac{d}{2}-1}-2x_ix_jq^{\frac{d}{2}-1})=0,\ \forall\  i,j\in\{0,\dots,m\}.
 	$$
 	We conclude that if $d$ is odd, the map $\varphi$ is an isomorphism in each irreducible representation; if $d$ is even, it is an isomorphism in each of them, with the exception of $H_0$, as we wished.
 \end{proof}
\begin{lema}{\label{h0 sym}}
	Let $Z$ be the zero locus of a section in $Q(d-1)$, and assume that $d\geq 3$. Then the natural map  from the Koszul complex $H^0(\End Q)\rightarrow H^0(\mathcal I_Z\otimes Q(d-1))$ is an isomorphism of $1$-dimensional spaces.
\end{lema}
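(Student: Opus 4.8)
The plan is to identify both sides of the claimed isomorphism explicitly and then check that the Koszul-induced map is nonzero. First I would compute $H^0(\End Q)$. The quotient bundle $Q$ on $\PP^m$ sits in the Euler sequence $0\to\OO(-1)\to\OO^{m+1}\to Q\to 0$ (or its dual twist), and $\End Q = Q\otimes Q^\vee$ is a homogeneous bundle on $\PP^m=SL(V)/P$; by Bott's theorem its only cohomology is in degree $0$, and $H^0(\End Q)$ is one-dimensional, spanned by the identity. (Alternatively, $Q$ is a simple bundle since it is homogeneous and irreducible as a $P$-representation, so $\Hom(Q,Q)=\CC\cdot\mathrm{id}$.) So the source is $1$-dimensional.

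Next I would pin down the target $H^0(\mathcal I_Z\otimes Q(d-1))$ where $Z=Z(s)$ is the zero scheme of a general section $s\in H^0(Q(d-1))$. The relevant comparison is the Koszul resolution of $\mathcal I_Z$: since $Q(d-1)$ has rank $m$ and $Z$ has the expected codimension $m$ (it is zero-dimensional, consisting of $ed_X$ points), the section $s$ gives a Koszul complex $0\to\wedge^m(Q(d-1))^\vee\to\cdots\to (Q(d-1))^\vee\to\mathcal I_Z\to 0$. Twisting by $Q(d-1)$ and chasing the hypercohomology spectral sequence, $H^0(\mathcal I_Z\otimes Q(d-1))$ is computed from the terms $H^q\big(\wedge^{q}(Q(d-1))^\vee\otimes Q(d-1)\big)$. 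The piece contributing in the naive way is $q=0$: $H^0(\End(Q(d-1)))=H^0(\End Q)$, which is the identity line, giving the map $s\otimes(-)$, i.e. the section $s$ itself viewed inside $H^0(\mathcal I_Z\otimes Q(d-1))$. I would then argue, using Bott vanishing on the homogeneous bundles $\wedge^q(Q^\vee)^{\oplus q}(- q(d-1))\otimes Q(d-1)$ for $q\ge 1$ and the hypothesis $d\ge 3$ (which ensures the relevant twists are negative enough to kill the higher $H^q$), that no other term survives, so $H^0(\mathcal I_Z\otimes Q(d-1))$ is also $1$-dimensional and is exactly the image of $H^0(\End Q)$ under $\alpha\mapsto (\alpha\circ s)$.

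Finally I would check the map is an isomorphism rather than zero: the identity endomorphism maps to $s$ itself, and $s$ vanishes precisely on $Z$ (this is the defining property of $Z$ recalled from \cite{Draisma}, \cite{Friedland}), so $s\in H^0(\mathcal I_Z\otimes Q(d-1))$ is a nonzero element; since both spaces are $1$-dimensional, the map is an isomorphism. The main obstacle I anticipate is the bookkeeping in the second step: one must verify that every higher Koszul term $H^q\big(\wedge^q(Q(d-1))^\vee\otimes Q(d-1)\big)$ vanishes for $1\le q\le m$, which requires decomposing $\wedge^q Q^\vee\otimes Q$ into irreducible homogeneous bundles and checking each resulting weight is singular or of the wrong index for Bott's theorem — this is where the hypothesis $d\ge 3$ is used (the borderline cases $d=1,2$ would produce surviving cohomology, matching the kernel phenomenon of Lemma \ref{symmetric}). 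A clean alternative, avoiding the full Koszul analysis, is to compare directly with the exact sequence $0\to\mathcal I_Z\otimes Q(d-1)\to Q(d-1)\to Q(d-1)|_Z\to 0$ and use that the evaluation $H^0(Q(d-1))\to H^0(Q(d-1)|_Z)$ has a $1$-dimensional kernel spanned by $s$ — the point being that the space of sections of $Q(d-1)$ vanishing on the $ed_X$ points of $Z$ is exactly $\langle s\rangle$ for general $s$, which follows from a dimension count using $h^0(Q(d-1))$ and $h^0(Q(d-1)|_Z) = m\cdot ed_X$ together with a transversality/generality argument.
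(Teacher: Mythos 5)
Your main argument is essentially the paper's proof: resolve $\mathcal I_Z$ by the Koszul complex of the section $s$, twist by $Q(d-1)$, and kill the higher cohomology of the Koszul terms with Bott's theorem (the paper likewise defers exactly these vanishings to Lemma \ref{bott}), so that $H^0(\End Q)=\CC\cdot\mathrm{id}$ maps isomorphically onto $H^0(\mathcal I_Z\otimes Q(d-1))$. One caution: the ``clean alternative'' you sketch via $0\to\mathcal I_Z\otimes Q(d-1)\to Q(d-1)\to Q(d-1)|_Z\to 0$ does not reduce to a dimension count, since $h^0(Q(d-1))<m\cdot ed_X$ already for moderate $d$ (e.g.\ $m=2$, $d\geq 4$), so the evaluation map is far from surjective and its corank cannot be read off by counting.
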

\begin{proof}
 	Indeed, consider the Koszul complex 
 	$$
 	0\xrightarrow{\varphi_{m} } \bigwedge^{m}Q^\ast(m(1-d))\xrightarrow{\varphi_{m-1}} \dots\xrightarrow{\varphi_2} \bigwedge^2Q^\ast(2(1-d)) \rightarrow Q^\ast(1-d)\rightarrow \mathcal I_Z\rightarrow 0,
 	$$
 	tensoring it by $Q(d-1)$ we obtain the exact sequence
 	$$
 	0\rightarrow \bigwedge^{m}Q^\ast\otimes Q((m-1)(1-d))\rightarrow \dots\rightarrow \bigwedge^2Q^\ast\otimes Q(1-d)\rightarrow \End(Q)\rightarrow \mathcal I_Z\otimes Q(d-1)\rightarrow 0.
 	$$
 	
 		Let $\mathcal F_r$ to be defined as the quotient $\mathcal F_r=\bigwedge^r Q^\ast(r(1-d))/\im \varphi_r$. Thus we obtain short exact sequences $$
 	\begin{matrix}
 	0\rightarrow\mathcal F_2\rightarrow  Q^\ast(1-d)\rightarrow \mathcal I_Z\rightarrow 0\\
 	0\rightarrow\mathcal F_{r+1}\rightarrow\bigwedge^r  Q^\ast(r-rd)\rightarrow \mathcal F_{r}\rightarrow 0,
 	\end{matrix}
 	$$
 	for $r=2,\dots, m$.
 	
 	Tensoring the second short exact sequence by $Q(d-1)$ we obtain the long exact sequence of cohomologies
 	
 	\begin{equation*}
 	\begin{aligned}
 	\dots\rightarrow H^{r-2}(\bigwedge^{r}Q^\ast\otimes Q((r-1)(1-d)))\rightarrow H^{r-2}(\mathcal F_r\otimes Q(d-1))\rightarrow H^{r-1}(\mathcal F_{r+1}\otimes Q(d-1))\rightarrow\\
 	\rightarrow H^{r-1}(\bigwedge Q^\ast\otimes Q((r-1)(1-d)))\rightarrow H^{r-1}(\mathcal F_r\otimes Q(d-1))\rightarrow H^r(\mathcal F_{r+1}\otimes Q(d-1))\rightarrow \dots
 	\end{aligned}
 	\end{equation*}

 		We have that $ \bigwedge^{r}Q^\ast\otimes Q((r-1)(1-d))=\bigwedge^{m-r}Q\otimes Q((r-1)(1-d)-1)$, so if we have that $r\geq 2$, we obtain that $H^{r-2}(\bigwedge^{m-r}Q\otimes Q((r-1)(1-d)-1))=0$. Also, if $d\geq 3$, $H^{r-1}(\bigwedge^{m-r}Q\otimes Q((r-1)(1-d)-1))=0$.
 		
 		This means that $$
 		\begin{matrix}
 		H^{0}(\mathcal F_2\otimes Q(d-1))\cong H^1(\mathcal F_3\otimes Q(d-1))\cong\dots\cong H^{m-1}(\mathcal F_{m+1}\otimes Q(d-1))=0\\
 		H^{1}(\mathcal F_2\otimes Q(d-1))\subset H^2(\mathcal F_3\otimes Q(d-1))\subset\dots\subset H^{m}(\mathcal F_{m+1}\otimes Q(d-1))=0
 		\end{matrix}
 		$$
 		Applying the long exact sequence of cohomologies to $$0\rightarrow \mathcal F_2\otimes Q(d-1)\rightarrow \End(Q)\rightarrow \mathcal I_Z\otimes Q(d-1)\rightarrow0$$
 		gives the desired result.
 	\end{proof}
 	
 	We would like to add a remark that, although already utilised, the vanishing of the cohomology $H^q(\bigwedge^rQ^\ast\otimes Q(t))$ is carefully done in the next section on Lemma \ref{bott}. We decide in favour of postponing those computations because the full usefulness of such cohomologies appears in the multisymmetric case.
 	
 	\begin{cor}\label{cor sym}
 		Let $f, g\in \Sym^d V$ be two general polynomials such that $Z(s_f)=Z(s_g)$, $d\geq 3$. Then $s_f=\alpha s_g$ for some $\alpha\in \CC^\ast$.
 	\end{cor}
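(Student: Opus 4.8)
The plan is to read the statement off immediately from Lemma \ref{h0 sym}. Set $Z:=Z(s_f)=Z(s_g)$, understood scheme-theoretically. Since $f$ and $g$ are general, $Z=\mathrm{Eig}(f)$ is a reduced set of $ed_X$ points in $\PP V=\PP^m$, hence of codimension $m=\rk\big(Q(d-1)\big)$, so it is the zero scheme of a regular section of $Q(d-1)$ and Lemma \ref{h0 sym} (which needs $d\geq 3$) applies to it.

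First I would check that both $s_f$ and $s_g$ lie in $H^0\big(\mathcal I_Z\otimes Q(d-1)\big)$. This is just left-exactness of global sections applied to the structure sequence
$$0\to \mathcal I_Z\otimes Q(d-1)\to Q(d-1)\to \big(Q(d-1)\big)\big|_Z\to 0 :$$
a global section of $Q(d-1)$ factors through $\mathcal I_Z\otimes Q(d-1)$ precisely when its restriction to $Z$ vanishes, and $s_f$ restricted to $Z=Z(s_f)$ is zero by the very definition of the zero scheme; the same applies to $s_g$.

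Next, Lemma \ref{h0 sym} gives $\dim H^0\big(\mathcal I_Z\otimes Q(d-1)\big)=1$ (the natural map from $H^0(\End Q)$, which is spanned by the identity, is an isomorphism onto it). Hence $s_f$ and $s_g$ are linearly dependent, say $s_f=\alpha s_g$ with $\alpha\in\CC$. To conclude $\alpha\neq 0$ it suffices to know $s_f\neq 0$ and $s_g\neq 0$, and this is exactly where generality is used: by Lemma \ref{symmetric} the kernel of $\varphi$ is $0$ when $d$ is odd and the line $\langle q^{d/2}\rangle$ when $d$ is even, a proper subspace of $\Sym^dV$ in either case, which a general polynomial avoids. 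Thus $s_f=\varphi(f)\neq 0$, $s_g=\varphi(g)\neq 0$, and $\alpha\in\CC^\ast$, as desired.

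I do not anticipate a genuine obstacle: the entire weight of the argument is carried by Lemma \ref{h0 sym}. The only points deserving care are that the hypothesis $Z(s_f)=Z(s_g)$ must be read scheme-theoretically (so that membership in $H^0(\mathcal I_Z\otimes Q(d-1))$ and the applicability of Lemma \ref{h0 sym} to $Z$ are both legitimate) and the elementary genericity remark that keeps both sections nonzero.
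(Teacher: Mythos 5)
Your argument is the same as the paper's: both deduce from $Z(s_f)=Z(s_g)$ that $s_f\in H^0(\mathcal I_{Z(s_g)}\otimes Q(d-1))$ and then invoke the one-dimensionality of that space from Lemma \ref{h0 sym} to conclude $s_f=\alpha s_g$. You merely spell out the details the paper leaves implicit (the ideal-sheaf sequence and the genericity remark ensuring $\alpha\neq 0$), which is fine.
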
 
 \begin{proof}
 	The hypothesis that $Z(s_f)=Z(s_g)$ implies that $s_f\in H^0(\mathcal I_{Z(s_g)}\otimes Q(d-1))$. Since this space is one-dimensional we have that $s_f=\alpha s_g$.

 \end{proof}

We conclude this section observing that since $\tau=\psi\circ \varphi$, then the Theorem \ref{fsym} is obtained just as the combinination of the Lemma \ref{symmetric} with the Corollary \ref{cor sym}.

\section{Multisymmetric Tensors}

Now that the pre-image of the map $\tau$ is completely analysed for symmetric tensors, we can go through to the next step, that is, we consider the Segre-Veronese variety $	\Sym^{d_1}V_1\otimes\dots\otimes \Sym^{d_k}V_k$ and we analyse the map $\tau:\PP\big(\Sym^{d_1}V_1\otimes\dots\otimes \Sym^{d_k}V_k\big)\rightarrow\big[\PP(V_1)\times\dots\times \PP(V_k)\big]^{(ed_X)}$ that associates a tensor $T$ to its singular tuples $Eig(T)$. We begin the multisymmetric case with the generalization of the Lemma \ref{symmetric} to Segre-Veronese varieties.   

	\begin{Teorema}\label{segrever}
	Let $V_1,\dots,V_k$ be vector spaces of dimension $m_1+1,\dots,m_k+1$, and we recall that  $q_i=x_0^2+\dots+x_{m_i}^2$ is the quadratic form on $V_i$ that defines the distance function for $i=1\,\dots,k$. We consider the map $$\varphi: \Sym^{d_1}V_1\otimes \dots\otimes \Sym^{d_k}V_k\rightarrow H^0(\mathcal E),$$ where $\mathcal E$ is defined in the Definition \ref{E}. Then $\varphi$ is injective if at least one $d_i$ is odd. In the case that all the $d_i$ are even, we have that the kernel of $\varphi$ is one dimensional and it is given by $$
	\ker \varphi=\langle q_1^{\frac{d_1}{2}}\rangle\otimes \dots\otimes \langle q_k^{\frac{d_k}{2}}\rangle
	$$
\end{Teorema}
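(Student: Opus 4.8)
The plan is to follow the proof of Lemma~\ref{symmetric}, replacing the single orthogonal group $SO(V)$ by the product group $G = O(V_1)\times\cdots\times O(V_k)$, which simultaneously preserves all the forms $q_l$ and acts on every object occurring in the statement.

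First I would set up the representation theory. For each $l$ the $O(V_l)$-invariance of $q_l$ gives the harmonic decomposition
$$\Sym^{d_l}V_l = \bigoplus_{0\le j\le d_l/2} H^{(l)}_{d_l-2j}, \qquad H^{(l)}_{d_l-2j} = \{\, h\,q_l^{\,j} : h \text{ harmonic of degree } d_l-2j \,\},$$
each summand being irreducible as an $O(V_l)$-module; it is in order to keep irreducibility even when some $m_l=1$ that one works with $O(V_l)$ rather than $SO(V_l)$. Distributing the tensor product over these direct sums, $\Sym^{d_1}V_1\otimes\cdots\otimes\Sym^{d_k}V_k$ decomposes as a $G$-module into the subspaces $\bigotimes_{l=1}^k H^{(l)}_{d_l-2j_l}$ indexed by tuples $(j_1,\dots,j_k)$, and each of these is $G$-irreducible since an external tensor product of irreducibles of the factors is irreducible over $\CC$. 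On the target side, $\mathcal E$ is a $G$-equivariant bundle on $X=\PP V_1\times\cdots\times\PP V_k$ (each quotient bundle $Q_l$ and each line bundle $\OO(1)$ on $\PP V_l$ is $GL(V_l)$-equivariant), so $H^0(\mathcal E)$ is a $G$-module; and since $s_T$ is produced from $T$ by a natural contraction (built using the $q_l$), the map $\varphi$ is $G$-equivariant. By Schur's lemma, $\varphi$ restricted to each summand $\bigotimes_l H^{(l)}_{d_l-2j_l}$ is either zero or injective, so $\ker\varphi$ is the sum of the summands killed by $\varphi$, and it remains to identify them.

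Next I would show that $\varphi$ vanishes on exactly one summand, the one with $d_l-2j_l=0$ for all $l$ (forcing all $d_l$ even). That this summand — the line $\langle q_1^{d_1/2}\otimes\cdots\otimes q_k^{d_k/2}\rangle$ — lies in $\ker\varphi$ is a direct computation: in the $l$-th component the section attached to $q_1^{d_1/2}\otimes\cdots\otimes q_k^{d_k/2}$ at a point $(v_1,\dots,v_k)$ is $\big(\prod_{i\ne l} q_i(v_i)^{d_i/2}\big)\,\nabla(q_l^{d_l/2})(v_l)$ read modulo $\langle v_l\rangle$, and $\nabla(q_l^{d_l/2}) = d_l\,q_l^{d_l/2-1}(x_0,\dots,x_{m_l})$ is proportional to the Euler vector field, so this component is identically zero; as this holds for every $l$, the whole section vanishes. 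Conversely, suppose $d_{l_0}-2j_{l_0}\ge 1$ for some index $l_0$, and consider $g=\bigotimes_l g_l$ with $g_l=(x_0+ix_1)^{d_l-2j_l}q_l^{\,j_l}$; since $(x_0+ix_1)^n$ is harmonic, $g$ lies in the summand attached to $(j_1,\dots,j_k)$. The $l_0$-th component of $s_g$ is $\big(\prod_{i\ne l_0} g_i(v_i)\big)\,\nabla g_{l_0}(v_{l_0})$ modulo $\langle v_{l_0}\rangle$, where the scalar factor is not identically zero on $X$, and the same identity as in Lemma~\ref{symmetric},
$$\frac{\partial g_{l_0}}{\partial x_0}\,x_1 - \frac{\partial g_{l_0}}{\partial x_1}\,x_0 = (d_{l_0}-2j_{l_0})\,(x_0+ix_1)^{d_{l_0}-2j_{l_0}-1}(x_1-ix_0)\,q_{l_0}^{\,j_{l_0}}\not\equiv 0,$$
shows $\nabla g_{l_0}$ is not pointwise proportional to the Euler field, so the $l_0$-th component of $s_g$ is a nonzero section of $\mathcal E_{l_0}$ and hence $s_g\ne 0$. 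Thus $\varphi$ is non-zero, hence injective, on this summand.

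Assembling the pieces finishes the proof: if some $d_i$ is odd, then in every summand the $i$-th factor has $d_i-2j_i$ odd and therefore $\ge 1$, so $\varphi$ is injective on each summand and thus globally injective; if all $d_l$ are even, the unique summand in the kernel is $\bigotimes_l H^{(l)}_0 = \langle q_1^{d_1/2}\rangle\otimes\cdots\otimes\langle q_k^{d_k/2}\rangle$, which is one-dimensional. I expect the only genuinely delicate point to be the equivariance bookkeeping — checking that $H^0(\mathcal E)$ is an honest $G$-module and $\varphi$ a $G$-equivariant map so that Schur's lemma applies verbatim, and passing to $O(V_l)$ in place of $SO(V_l)$ so that the harmonic pieces stay irreducible in all dimensions; the two gradient identities are then entirely routine and parallel the symmetric case already treated.
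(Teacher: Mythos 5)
Your proposal follows essentially the same route as the paper's proof: decompose each $\Sym^{d_l}V_l$ into harmonic pieces, note that the external tensor products of these pieces are irreducible for the product group so that Schur's lemma applies, and then test each summand on the element $\bigotimes_l (x_0+ix_1)^{d_l-2j_l}q_l^{j_l}$ via the same $2\times 2$-minor computation used in the symmetric case, while checking directly that $q_1^{d_1/2}\otimes\cdots\otimes q_k^{d_k/2}$ maps to the zero section. Your substitution of $O(V_l)$ for the paper's $SO(V_l)$ is a sensible refinement (it keeps the harmonic pieces irreducible when $\dim V_l=2$, where $SO(2)$ is abelian and $H_d$ would split further), but it does not change the structure of the argument.
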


\begin{proof}
	Since we have that $$\Sym^{d_l}V_l\cong H_{d_l}\oplus H_{d_l-2}\oplus\dots \oplus\renewcommand\arraystretch{1.25}
	\left\{
	\begin{array}{llll}
	H_1 \text{ if $d_l$ is odd} \\
	H_0 \text{ if $d_l$ is even},
	\end{array}
	\right.$$
	and that each $H_{d_j-2t_j}$ is an irreducible $SO(V_l)$-representation, then also $H_{d_1-2t_1}\otimes\dots \otimes H_{d_k-2t_k}$ is an irreducible $SO(V_1)\times\dots\times SO(V_k)$-representation, we need to show that $\varphi$ is non zero when $d_j-2t_j>0$ for at least one $j$, and that it is zero when we have $d_j-2t_j=0$ for all $j$.
	
	Indeed, in the first case we consider the element $$
	g=g_1\otimes\dots\otimes g_k,\ g_j=(x_0+ix_1)^{k-2j}q^{t_j},
	$$
	then $\varphi(g)=s_g=(s_{g_1}\otimes \mathds{1}) \oplus\dots\oplus (\mathds{1}\otimes s_{g_k})$, where $s_{g_j}\otimes\mathds{1}\in\mathcal E_j$ is non zero as seen before in the symmetric tensor case. Therefore by Schur's lemma we have that in this restriction the map is an isomorphism, thus if $d_j-2t_j>0$ for some $j$, $s_g$ does not belong to the kernel of $\varphi$.
	
	On the other hand, if all $d_j-2t_j=0$, then $g_{j}=c q^{\frac{d_j}{2}}$, where $c\in\CC$, then $s_{g_j}=0$, therefore summing all together we obtain that $s_g=0$, so by Schur's Lemma the restriction of $\varphi$ on this subrepresentations is the zero map, as wished.
\end{proof}

With this result we understand the first map $\varphi$ in the decomposition $\tau=\psi\circ\varphi$. Now we can aim to understand better the map $\psi$, we will show that, under the hypothesis of Theorem \ref{fmulti}, when two section $s,\ t$ have the same image under the map $\psi$, where $s,\ t$ are sections coming from tensors $S,\ T\in\Sym^{d_1}V_1\otimes \dots\otimes \Sym^{d_k}V_k$, then $s=\lambda t$.

The first step to achieve this goal is to prove a similar result to Lemma \ref{h0 sym} for the case of multisymmetric tensors, in order to do that we prove a series of technical lemmas.
\begin{lema}
	Let $\mathcal E^\ast=\bigoplus_{l=1}^k Q_l^\ast(-d_1,\dots,-d_l+1,\dots,-d_k)$, then, for $j=1,\dots, k$, then 
\begin{equation*}
\begin{aligned}
\bigwedge^r\mathcal  E^\ast&\otimes Q_j(d_1,\dots,d_j-1,\dots,d_k)=\\ &=\bigoplus_{r_1+\dots+r_k=r}\bigotimes_{l=1,l\neq j}^k\Omega^{r_l}_{\PP^{m_l}}(2r_l-d_l(r-1))\otimes\bigwedge^{m_j-r_j}Q_j\otimes Q_j(-d_j(r-1)+r_j-2).
\end{aligned}
\end{equation*}

\end{lema}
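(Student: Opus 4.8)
The plan is to establish this as a direct identity of vector bundles on $\PP^{m_1}\times\dots\times\PP^{m_k}$, built from three standard facts: the splitting of an exterior power of a direct sum, the compatibility of exterior powers with twisting by a line bundle, and the exact sequence $0\to\OO_{\PP^{m_l}}(-1)\to V_l\otimes\OO\to Q_l\to 0$ defining each $Q_l$.

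First I would apply $\bigwedge^r\big(\bigoplus_l F_l\big)\cong\bigoplus_{r_1+\dots+r_k=r}\bigotimes_l\bigwedge^{r_l}F_l$ to $\mathcal E^\ast=\bigoplus_{l=1}^k\mathcal E_l^\ast$, which reduces the claim to identifying each summand $\bigotimes_l\bigwedge^{r_l}\mathcal E_l^\ast$. Writing $\mathcal E_l^\ast=\pi_l^\ast Q_l^\ast\otimes L_l$ with $L_l=\OO(-d_1,\dots,-d_l+1,\dots,-d_k)$, the identity $\bigwedge^{r_l}(W\otimes L)\cong\big(\bigwedge^{r_l}W\big)\otimes L^{\otimes r_l}$ gives $\bigwedge^{r_l}\mathcal E_l^\ast\cong\pi_l^\ast\big(\bigwedge^{r_l}Q_l^\ast\big)\otimes L_l^{\otimes r_l}$.

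Next I would use the above exact sequence, which identifies $Q_l\cong T_{\PP^{m_l}}(-1)$, hence $Q_l^\ast\cong\Omega^1_{\PP^{m_l}}(1)$ and $\bigwedge^{r_l}Q_l^\ast\cong\Omega^{r_l}_{\PP^{m_l}}(r_l)$; for the distinguished index $j$ I would instead use $\rk Q_j=m_j$ and $\det Q_j=\OO(1)$ together with the duality $\bigwedge^{r_j}Q_j^\ast\cong\bigwedge^{m_j-r_j}Q_j\otimes(\det Q_j)^{-1}\cong\bigwedge^{m_j-r_j}Q_j(-1)$. Finally I would tensor the whole summand by $Q_j(d_1,\dots,d_j-1,\dots,d_k)$ and collect the multidegree twists slot by slot. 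In a slot $p\neq j$ the twist is $r_p$ (from $\Omega^{r_p}(r_p)$), plus $\big(\sum_l r_lL_l\big)_p=-d_pr+r_p$ (using $\sum_l r_l=r$), plus $d_p$ (from the extra factor $Q_j(\dots)$), for a total of $2r_p-d_p(r-1)$; in slot $j$ the twist is $-1$ (from $\bigwedge^{m_j-r_j}Q_j(-1)$), plus $-d_jr+r_j$, plus $d_j-1$, for a total of $-d_j(r-1)+r_j-2$. Substituting these values into the summand yields exactly the stated right-hand side.

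There is no substantive obstacle; the only delicate point is the bookkeeping of the $k$-tuple of twists — each $L_l$ contributes in all $k$ slots, not only the $l$-th — together with the deliberate asymmetry of treating the $j$-th factor via $\bigwedge^{m_j-r_j}Q_j$ rather than $\Omega^{r_j}$. That asymmetry is chosen with the downstream application of Bott's theorem in mind: the factors $\Omega^{r_l}(\text{twist})$ have controllable cohomology, while the single factor $\bigwedge^{m_j-r_j}Q_j\otimes Q_j$ carries the $\End Q_j$-type contribution that must be handled separately, exactly as in Lemma \ref{h0 sym}.
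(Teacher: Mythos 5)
Your proof is correct and follows the same route as the paper: split $\bigwedge^r$ of the direct sum, extract the line-bundle twists via $\bigwedge^{r_l}(W\otimes L)\cong\bigwedge^{r_l}W\otimes L^{\otimes r_l}$, identify $\bigwedge^{r_l}Q_l^\ast\cong\Omega^{r_l}_{\PP^{m_l}}(r_l)$ for $l\neq j$ and $\bigwedge^{r_j}Q_j^\ast\cong\bigwedge^{m_j-r_j}Q_j(-1)$ for the distinguished factor, then collect degrees slot by slot. The bookkeeping matches the paper's exactly, so there is nothing to add.
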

\begin{proof}
	From the definition of $\mathcal E$ we have that
	$$
	\bigwedge^r \mathcal E^\ast=\bigoplus_{r_1+\dots+r_k=r}\bigotimes_{l=1}^k\big( \bigwedge^{r_l}Q_l^\ast\big)\big(-r_ld_1,\dots,-r_l(d_l-1),\dots,-r_ld_k) \big),
	$$
	by separating the terms we obtain that $$
	\bigwedge^r \mathcal E^\ast=\bigoplus_{r_1+\dots+r_k=r}\bigotimes_{l=1}^k\bigwedge^{r_l}Q^\ast_l(-rd_l+r_l).
	$$
	We now tensor it by $Q_j(d_1,\dots,d_j-1,\dots,d_k)$, so we have that $\bigwedge^r\mathcal  E^\ast\otimes Q_j(d_1,\dots,d_j-1,\dots,d_k)$ is equal to $$
	\bigoplus_{r_1+\dots+r_k=r}\bigotimes_{l=1,l\neq j}^k\bigwedge^{r_l}Q^\ast_l(-rd_l+r_l+d_l)\otimes\bigwedge^{r_j}Q^\ast_j\otimes Q_j(-rd_j+r_j+d_j-1).
	$$
	We now use the facts that $\Omega^{r_l}(r_l)=\bigwedge^{r_l}(\Omega^1(1))$, $\Omega^1(1)=Q^\ast$, and $\bigwedge^{r_j}Q_j^\ast=\bigwedge^{m_j-r_j}Q_j(-1)$, to obtain that $\bigwedge^r \mathcal E^\ast\otimes Q_j(d_1,\dots,d_j-1,\dots,d_k)$ is equal to 
	$$
	\bigoplus_{r_1+\dots+r_k=r}\bigotimes_{l=1,l\neq j}^k\Omega^{r_l}_{\PP^{m_l}}(2r_l-d_l(r-1))\otimes\bigwedge^{m_j-r_j}Q_j\otimes Q_j(-d_j(r-1)+r_j-2).
	$$
\end{proof}

\begin{lema}[Bott's Theorem]\label{bott}
	The cohomology $H^q(\bigwedge^{m_j-r_j}Q_j\otimes Q_j(t))$ is non vanishing for the following cases\begin{equation}
	H^q\bigg(\bigwedge^{m_j-r_j}Q_j\otimes Q_j(t)\bigg)\neq 0, \text{ if } \setlength\arraycolsep{0pt}
	\renewcommand\arraystretch{1.25}
	\left\{
	\begin{array}{llll}
	q=0,\ t\geq 0,\\
	q=r_j-1,\ t=-r_j,\ 1\leq r_j\leq m_j, \\
	q=r_j,\ t=-r_j-1,\ 0\leq r_j\leq m_j-1,\\
	q=m_j-1,\ t=-m_j-1,\ 0\leq r_j\leq m_j-1,\\
	q=m_j,\ t\leq-m_j-2.
	\end{array}
	\right.
	\end{equation}

\end{lema}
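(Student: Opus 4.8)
The plan is to reduce the computation to the classical Bott formula for the cohomology of bundles of differential forms on $\PP^{m_j}$ together with the Euler sequence, and then to chase a single long exact sequence for each value of $t$.

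The first step is to rewrite the bundle: since $Q_j^\ast\cong\Omega^1_{\PP^{m_j}}(1)$ and $\det Q_j\cong\OO_{\PP^{m_j}}(1)$, one has $\bigwedge^{m_j-r_j}Q_j\cong\bigwedge^{r_j}Q_j^\ast\otimes\det Q_j\cong\Omega^{r_j}_{\PP^{m_j}}(r_j+1)$, so that $\bigwedge^{m_j-r_j}Q_j\otimes Q_j(t)\cong\Omega^{r_j}_{\PP^{m_j}}(r_j+1)\otimes Q_j(t)$. Tensoring the tautological sequence $0\to\OO(-1)\to V_j\otimes\OO\to Q_j\to 0$ by $\Omega^{r_j}_{\PP^{m_j}}(r_j+1+t)$ then gives the short exact sequence
\[
0\to\Omega^{r_j}_{\PP^{m_j}}(r_j+t)\to V_j\otimes\Omega^{r_j}_{\PP^{m_j}}(r_j+1+t)\to\bigwedge^{m_j-r_j}Q_j\otimes Q_j(t)\to 0,
\]
whose two outer terms have cohomology governed by Bott's formula: $H^q(\PP^{m_j},\Omega^p(k))$ vanishes unless $q=0$ and $k>p$, or $q=p$ and $k=0$, or $q=m_j$ and $k<p-m_j$ (with the evident degeneracies when $p\in\{0,m_j\}$). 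This is the decisive simplification: no Schur functors are needed, only twisted forms.

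Feeding the twists $k=r_j+t$ and $k=r_j+1+t$ into this formula, for each fixed $t$ at most one cohomological degree survives in each outer term; away from the borderline values $t\in\{0,-r_j,-r_j-1,-m_j-1,-m_j-2\}$ both outer terms carry their nonzero cohomology in the same degree, while at the borderline values one of them typically vanishes. Running the long exact sequence then produces exactly the five cases: for $t\geq 0$ the bundle is globally generated, so $H^0\neq0$; at $t=-r_j$ the middle term has no cohomology, hence $H^q(\bigwedge^{m_j-r_j}Q_j\otimes Q_j(-r_j))\cong H^{q+1}(\Omega^{r_j}(0))$, nonzero exactly for $q=r_j-1$; at $t=-r_j-1$ the sub has no cohomology, hence $H^q\cong V_j\otimes H^q(\Omega^{r_j}(0))$, nonzero exactly for $q=r_j$; at $t=-m_j-1$ the middle term again vanishes, hence $H^q\cong H^{q+1}(\Omega^{r_j}(r_j-m_j-1))$, nonzero exactly for $q=m_j-1$; and for $t\leq-m_j-2$ the top cohomology $H^{m_j}$ is nonzero, as one sees after dualising by Serre duality to the $H^0$ of a suitable twist of $\bigwedge^{r_j}Q_j\otimes Q_j^\ast$. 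For every other pair $(q,t)$ both outer ends of the sequence vanish in degrees $q$ and $q+1$, hence so does the middle term.

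The step I expect to be the main obstacle is the bookkeeping at the extremal values $r_j\in\{0,m_j\}$ (and for small $m_j$), where Bott's formula degenerates and one must decide whether a connecting homomorphism in the long exact sequence is an isomorphism --- which kills a would-be contribution --- or not; this is precisely what forces the range restrictions $1\leq r_j\leq m_j$ in the second case and $0\leq r_j\leq m_j-1$ in the third and fourth. Checking the non-vanishing of the relevant cokernels at the two extreme twists $t=0$ and $t=-m_j-2$ likewise requires an explicit, if routine, count of sections of the bundles $\Omega^p(k)$.
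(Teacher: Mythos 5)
Your argument is correct, but it takes a genuinely different route from the paper. The paper applies Bott's theorem for homogeneous bundles directly: it writes down the highest weights $\lambda_{(1)},\lambda_{(2)}$ of the two irreducible summands of $\bigwedge^{m_j-r_j}Q_j\otimes Q_j(t)$ (treating $r_j=0$, $1\leq r_j\leq m_j-1$ and $r_j=m_j$ separately), pairs $\lambda+\delta$ with the positive roots, and reads off singularity or the index of regularity case by case. You instead reduce everything to the classical Bott formula for $\Omega^p(k)$ via the identification $\bigwedge^{m_j-r_j}Q_j\cong\Omega^{r_j}(r_j+1)$ and the tautological sequence, then chase one long exact sequence per value of $t$. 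Your approach is more elementary (no weights or Killing form), at the price of having to control connecting maps and cokernels at the borderline twists; the paper's approach gets each case for free as an index computation but requires the weight bookkeeping for a non-irreducible bundle. Both yield the same table. Two small points of care for your write-up: your blanket sentence ``at $t=-r_j-1$ the sub has no cohomology'' fails for $r_j=m_j$ (there $A=\OO(-m_j-2)$ has $H^{m_j}\neq0$ and the connecting map kills everything), which is exactly why the lemma restricts that line to $r_j\leq m_j-1$ --- you flag this, but it should be carried out; and at $r_j=0$, $t=-m_j-2$ the bundle is $Q_j(-m_j-1)$, which is acyclic (your own Serre-duality test gives $H^0(Q_j^\ast)=0$), so the fifth line of the statement is slightly over-inclusive there --- the paper's proof exhibits the same vanishing, and since the lemma is only used as an upper bound on non-vanishing loci, this is harmless for both arguments.
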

\begin{proof}
	The associated weight will be calculated in three cases depending on the $r_j$; the cases are $r_j=0$, $1\leq r_j\leq m_j-1$ and $r_j=m_j$. 
	
	For the case $1\leq r_j\leq m_j-1$, we have that $\bigwedge^{m_j-r_j}Q_j\otimes Q_j(t)$ is not irreducible, therefore we have that the associated weight $\lambda$ is given by two parts
	$$
	\lambda=\lambda_{(1)}\oplus\lambda_{(2)}.
	$$
	where 
	$\lambda_{(1)}=\lambda_{r_j+1}+\lambda_{m_j}+t\lambda_1$ and $\lambda_{(2)}=\lambda_{r_j}+t\lambda_1$.
	
	For $\lambda_{(1)}$ we have that 
	$$
	(\lambda_{(1)}+\delta,\alpha_1+\dots+\alpha_s)=\setlength\arraycolsep{0pt}
	\renewcommand\arraystretch{1.25}
	\left\{
	\begin{array}{llll}
	s+t \text{ if $s\leq r_j$,} \\
	s+t+1 \text{ if $r_j+1\leq s\leq m_j-1$,}\\
	s+t+2 \text{ if $s= m_j$.}
	\end{array}
	\right.
	$$
	This implies the following cases:\begin{enumerate}
		\item $t\geq0$, then index $0$.
		\item $-1\geq t\geq-r_j$, then it is singular ($s=-t$ gives the vanishing).
		\item If $t=-r_{j}-1$, then index $r_j$.
		\item If $-r_j-2\geq t\geq -m_j$, then it is singular ($s=-t-1$ gives the vanishing).
		\item if $t=-m_j-1$, then index $m_j-1$. 
		\item if $t=-m_j-2$, it is singular ($s=m_j$).
		\item if $t\leq -m_j-3$, then index $m_j$.
	\end{enumerate}

	For $\lambda_{(2)}$ we have that $$
	(\lambda_{(2)}+\delta,\alpha_1+\dots+\alpha_s)=\setlength\arraycolsep{0pt}
	\renewcommand\arraystretch{1.25}
	\left\{
	\begin{array}{llll}
	s+t \text{ if $s\leq r_j-1$,} \\
	s+t+1 \text{ if $s\geq r_j$.}
	\end{array}
	\right.
	$$
	That implies the following cases:\begin{enumerate}
		\item If $t\geq 0$, index $0$.
		\item If $-1\geq t \geq -(r_j-1)$, singular for $s=-t$.
		\item If $t=-r_j$, index $r_j-1$.
		\item If $-r_j-1\geq t\geq -m_j-1$, singular for $s=-t-1$.
		\item If $t\leq -m_j-2$, index $m_j$.
		
	\end{enumerate}
	For $r_j=m_j$ we have $Q_j(t)$, therefore the associated weight $\lambda$ is $
	\lambda=\lambda_{m_j}+t\lambda_1,
	$
	thus we have
	$$
	(\lambda+\delta,\alpha_1+\dots+\alpha_s)=\setlength\arraycolsep{0pt}
	\renewcommand\arraystretch{1.25}
	\left\{
	\begin{array}{llll}
	s+t \text{ if $s\leq m_j-1$,} \\
	s+t+1 \text{ if $s= m_j$.}
	\end{array}
	\right.
	$$
	This implies the following cases \begin{enumerate}
		\item If $t\geq 0$, we have index $0$.
		\item If $-1\geq t \geq -m_j+1$, then it is singular for $s=-t$.
		\item If $t=-m_j$, then index $m_j-1$.
		\item If $t=-m_j-1$, then it is singular for $s=m_j$.
		\item If $t\leq -m_j-2$, then index $m_j$.
	\end{enumerate}
	
	The final case is when $r_j=0$, then we have $Q_j(t+1)$ and the associated weight $\lambda$ is $\lambda_{m_j}+(t+1)\lambda_1$, therefore  	
	$$
	(\lambda+\delta,\alpha_1+\dots+\alpha_s)=\setlength\arraycolsep{0pt}
	\renewcommand\arraystretch{1.25}
	\left\{
	\begin{array}{llll}
	s+t+1 \text{ if $s\leq m_j-1$,} \\
	s+t+2 \text{ if $s= m_j$.}
	\end{array}
	\right.
	$$
	This implies the following cases \begin{enumerate}
		\item If $t\geq -1$, we have index $0$.
		\item If $-2\geq t \geq -m_j$, then it is singular for $s=-t-1$.
		\item If $t=-m_j-1$, then index $m_j-1$.
		\item If $t=-m_j-2$, then it is singular for $s=m_j$.
		\item If $t\leq -m_j-3$, then index $m_j$.
	\end{enumerate} 
\end{proof}
\begin{lema}[Bott's Theorem]
	\begin{equation}
	H^q\big(\Omega^{r_l}(t)\big)\neq 0 \text{ if } \setlength\arraycolsep{0pt}
	\renewcommand\arraystretch{1.25}
	\left\{
	\begin{array}{llll}
	q=0,\ t>r_l\\
	q=r_l,\ t=0\\
	q=n,\ t<r_l-n
	\end{array}
	\right.
	\end{equation}
\end{lema}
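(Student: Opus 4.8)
The plan is to argue exactly as in Lemma~\ref{bott}: realise $\Omega^{r_l}_{\PP^{m_l}}(t)$ as a homogeneous bundle $E_\lambda$ on $\PP^{m_l}=SL_{m_l+1}/P$, compute its highest weight $\lambda$, and run Bott's algorithm on $\delta+\lambda$.

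First I would pin down $\lambda$. Since $\Omega^1_{\PP^{m_l}}(1)=Q_l^\ast$, one has $\Omega^{r_l}(r_l)=\bigwedge^{r_l}\bigl(\Omega^1(1)\bigr)=\bigwedge^{r_l}Q_l^\ast$, hence $\Omega^{r_l}(t)=\bigwedge^{r_l}Q_l^\ast\otimes\OO(t-r_l)$. Using $\bigwedge^{r_l}Q_l^\ast=\bigwedge^{m_l-r_l}Q_l(-1)$ together with the weight conventions of Lemma~\ref{bott} ($\OO(1)\leftrightarrow\lambda_1$, $\bigwedge^{a}Q_l\leftrightarrow\lambda_{m_l+1-a}$), the bundle $\Omega^{r_l}(t)$ is the irreducible homogeneous bundle with highest weight $\lambda=\lambda_{r_l+1}+(t-r_l-1)\lambda_1$ whenever $1\le r_l\le m_l-1$; the extreme cases reduce to line bundles, $\Omega^{0}(t)=\OO(t)$ and $\Omega^{m_l}(t)=\OO(t-m_l-1)$, for which the statement is just the classical cohomology of $\OO(s)$ on $\PP^{m_l}$.

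Next, for $1\le r_l\le m_l-1$ I would evaluate the Killing pairing of $\delta+\lambda$ against the positive roots of type $A_{m_l}$. As in Lemma~\ref{bott}, the only roots that can yield a zero or negative value are $\alpha_1+\dots+\alpha_s$ (the coefficient $1$ at node $r_l+1$ never causes trouble, only the coefficient $t-r_l-1$ at node $1$ does), and
$$
(\delta+\lambda,\ \alpha_1+\dots+\alpha_s)=
\begin{cases}
s+t-r_l-1, & 1\le s\le r_l,\\
s+t-r_l, & r_l+1\le s\le m_l.
\end{cases}
$$
A short case analysis then shows that $\delta+\lambda$ is singular precisely when $1\le t\le r_l$ or $r_l-m_l\le t\le-1$; otherwise it is regular, of index $0$ if $t\ge r_l+1$, of index $r_l$ if $t=0$, and of index $m_l$ if $t\le r_l-m_l-1$. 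Bott's theorem then gives $H^q(\Omega^{r_l}(t))=0$ for every $q$ when $\delta+\lambda$ is singular, and $H^q(\Omega^{r_l}(t))=0$ for $q$ different from the index otherwise, which is exactly the asserted list of non-vanishing cases (with $n=m_l$).

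I do not expect a genuine obstacle: the content is the classical Bott formula for $\Omega^{p}_{\PP^n}(t)$, and the work is the same bookkeeping as in Lemma~\ref{bott}. The only points needing care are the correct identification of the highest weight and the boundary values $r_l\in\{0,m_l\}$ together with the transition value $t=r_l-m_l-1$. Alternatively one could bypass weights entirely and induct on $r_l$ via the twisted exterior Euler sequences $0\to\Omega^{r_l}(t)\to\OO(t-r_l)^{\oplus\binom{m_l+1}{r_l}}\to\Omega^{r_l-1}(t)\to0$ starting from $\Omega^0=\OO$, but the Bott-algorithm route is shorter and consistent with the rest of this section.
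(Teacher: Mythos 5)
Your argument is correct: the identification $\Omega^{r_l}(t)=\bigwedge^{r_l}Q^\ast(t-r_l)$, the highest weight $\lambda_{r_l+1}+(t-r_l-1)\lambda_1$, and the pairing values $(\delta+\lambda,\alpha_1+\dots+\alpha_s)$ are all right, and the resulting index bookkeeping reproduces exactly the three non-vanishing cases (including the boundary cases $r_l\in\{0,m_l\}$, which reduce to line bundles). The paper states this lemma without proof, as the classical Bott formula, so there is nothing to compare against; your computation is the same Bott-algorithm routine the paper itself carries out for Lemma \ref{bott}, and is a perfectly adequate justification.
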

\begin{lema}\label{vanishing}
	Let $m_l=\dim \PP V_l$ and $k\geq3$. Suppose that $m_l\leq\sum_{i\neq l}m_i$ holds for every $l$ such that $d_l=1$. Let $r\geq2$ be an integer, $q_1,\dots,q_k$ be non negative integers such that $\sum q_l\leq r-1$, and let $r_1,\dots,r_k$ be non negative integers such that $\sum r_l=r$, then $$
	\bigotimes_{l=1,l\neq j}^kH^{q_l}(\Omega^{r_l}_{\PP^{m_l}}(2r_l-d_l(r-1)))\bigotimes H^{q_j}\big(\bigwedge^{m_j-r_j}Q_j\otimes Q_j(-d_j(r-1)+r_j-2)\big)=0, 
	$$
	for every $j\in\{1,\dots,k\}$. 
	
	Furthermore, if $k=2$ and we add the hypothesis that $(d_1,d_2)\neq (1,1)$ the result still holds.
\end{lema}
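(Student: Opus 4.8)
The plan is to argue by contradiction, feeding the Bott computations above through K\"unneth's formula. Suppose the displayed tensor product is nonzero. Since the cohomology of a tensor product of pulled-back bundles on $\PP V_1\times\dots\times\PP V_k$ splits by K\"unneth, every factor then has nonvanishing cohomology, and the Bott statements (Lemma \ref{bott} and Bott's theorem for the $\Omega^{r_l}_{\PP^{m_l}}$'s) pin down, slot by slot, the admissible pairs $(q_l,r_l)$ together with the resulting numerical constraints on $m_l$, $d_l$ and $r$. We may and do assume $m_i\geq 1$ for all $i$, since $\dim V_i=1$ makes $Q_i$ and hence $\mathcal E_i$ vanish, lowering $k$. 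Two reductions open the argument: (i) $H^0\big(\bigwedge^{m_j-r_j}Q_j\otimes Q_j(r_j-d_j(r-1)-2)\big)\neq 0$ would force $r_j\geq d_j(r-1)+2\geq r+1$, impossible since $r_j\leq\sum_l r_l=r$, so $q_j\geq 1$; (ii) for $l\neq j$, $H^0\big(\Omega^{r_l}_{\PP^{m_l}}(2r_l-d_l(r-1))\big)\neq 0$ would force $r_l>d_l(r-1)\geq r-1$, hence $r_l=r$, $d_l=1$, $r\leq m_l$ and $r_i=0$ for all $i\neq l$, so at most one slot $l\neq j$ can contribute an $H^0$.

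Case 1: some slot $l\neq j$ contributes an $H^0$. Then $r_l=r$, $d_l=1$, all other $r_i=0$; the slots $i\neq l,j$ carry $\mathcal O_{\PP^{m_i}}(-d_i(r-1))$, forcing $q_i=m_i$, and $q_j\geq 1$ with $r_j=0$ leaves (Lemma \ref{bott}) only $q_j\in\{m_j-1,m_j\}$. If $q_j=m_j$ then $\sum_l q_l=\sum_{i\neq l}m_i\geq m_l\geq r$ by the triangular inequality (legitimate because $d_l=1$) and by $r\leq m_l$, contradicting $\sum_l q_l\leq r-1$. If $q_j=m_j-1$ the twist condition gives $d_j(r-1)=m_j-1$ and $\sum_l q_l=\sum_{i\neq l}m_i-1$, so $\sum_l q_l\leq r-1$ together with the triangular inequality forces $\sum_{i\neq l}m_i=m_l=r$; then $\sum_{i\neq l,j}m_i=r-m_j=(r-1)(1-d_j)\leq 0$, impossible since there are $k-2\geq 1$ such slots, each with $m_i\geq 1$.

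Case 2: no slot $l\neq j$ contributes an $H^0$. Then for each $l\neq j$ Bott allows only $q_l=r_l$ (which needs $2r_l=d_l(r-1)$) or $q_l=m_l$ with $r_l=m_l$ (which needs $r_l+m_l<d_l(r-1)$); in either case $q_l\geq r_l$. Combined with $q_j\geq 1$ and $\sum_l q_l\leq r-1$ this forces $q_j\leq r_j-1$, and the only surviving option in Lemma \ref{bott} is $q_j=r_j-1$ with $2r_j=d_j(r-1)+2$ and $r_j\leq m_j$, together with $q_l=r_l$ for every $l\neq j$. Each $l\neq j$ thus satisfies $2r_l\leq d_l(r-1)$, with equality exactly in the first case and $2r_l+1\leq d_l(r-1)$, $r_l=m_l$, in the second. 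If every $l\neq j$ is of the first type, summing over all $l$ gives $(r-1)\sum_l d_l=2(r-1)$, i.e. $\sum_l d_l=2$, impossible for $k\geq 3$. Otherwise let $C\neq\emptyset$ be the set of $l\neq j$ of the second type; substituting into $\sum_l r_l=r$ yields $\sum_{l\in C}m_l=\frac{1}{2}(r-1)\big(2-d_j-\sum_{l\notin C,\,l\neq j}d_l\big)$, and since this is $\geq|C|\geq 1$ we must have $d_j=1$, no slots of the first type, hence $C=\{1,\dots,k\}\setminus\{j\}$ and $\sum_{l\neq j}m_l=(r-1)/2$; but then $r_j=(r+1)/2\leq m_j\leq\sum_{l\neq j}m_l=(r-1)/2$ by the triangular inequality, a contradiction.

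For $k=2$ one runs the same dichotomy with $\{1,2\}=\{j,l\}$, the hypotheses now being the triangular inequality (for $d_l=1$: $m_l\leq m_{3-l}$) and $(d_1,d_2)\neq(1,1)$. If $l$ contributes an $H^0$, then $d_l=1$ forces $d_j\geq 2$, whereupon $q_j=m_j$ gives $\sum_l q_l=m_j\geq m_l\geq r$ and $q_j=m_j-1$ gives $\sum_l q_l=d_j(r-1)\geq 2(r-1)>r-1$; otherwise one is reduced as above to $q_j=r_j-1$, $2r_j=d_j(r-1)+2$, $q_l=r_l$, and the first type gives $d_1+d_2=2$, i.e. $(d_1,d_2)=(1,1)$ (excluded), while the second type forces $r_l=m_l$, $d_j=1$, $m_l=(r-1)/2$, hence $d_l\geq 2$, and $r_j=(r+1)/2\leq m_j\leq m_l=(r-1)/2$ again contradicts the triangular inequality. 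The real obstacle is not any single computation but the fact that in every branch the crude count of cohomological degrees $\sum_l q_l$ lands exactly on the forbidden value $r-1$ in certain ``boundary'' configurations; the hypotheses $k\geq 3$ (resp. $(d_1,d_2)\neq(1,1)$), the triangular inequality and $m_i\geq 1$ are precisely what rule those out, and the bookkeeping that isolates these configurations among all Bott cases is the main technical effort.
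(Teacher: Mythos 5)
Your proof is correct and follows essentially the same route as the paper: contradiction via K\"unneth and the two Bott tables, with the degree count $\sum q_l\le r-1$ versus $\sum r_l=r$ and the triangular inequality supplying the contradiction in every branch. Your handling of the final branch (summing the twist conditions to get $(r-1)\sum_l d_l=2(r-1)$, then isolating the set $C$ to force $d_j=1$) is a slightly tidier bookkeeping of the paper's sub-cases $d_j=2$ versus $d_j=1$, but the substance is identical.
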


\begin{proof}
	
Suppose that the cohomology of the tensor product is non vanishing. We fix that the index $j$ will associated to the unique case coming from the cohomology table $(1)$, if not said otherwise.
	
	\textbf{Not all the cases can come from the third, fourth or fifth line of $(1)$ and from the second and third lines of $(2)$.} Suppose that one case comes from either the third, fourth or fifth lines of $(1)$, and all the remaining cases come from the second and third line $(2)$, this means that $q_l\geq r_l$, and we have that $r>q=\sum q_l\geq \sum r_l=r$.

	So at least one cohomology case must come from the other lines in $(1)$ or $(2)$.
	
	\textbf{No case can come from the first line of $(1)$.} Suppose that the only case of $(1)$ comes from the first line, this means that $-d_j(r-1)+r_j-2\geq 0$, so we obtain that $$r_j \geq (r-1)d_j+2>d_j(r-1)+1\geq (r-1)+1=r,$$ that is $r_j>r$, a contradiction.
	
	\textbf{No case can come from the fist line of $(2)$.} Suppose that we have one case coming from the first line of $(2)$ for a fixed $l$, we have that $r_l>d_l(r-1)$, then the only possiblity is that $r_l=r$ and all other $r_i=0$, for $i\neq l$ and $d_l=1$. In such case, for $i\neq l$ we have that the other cohomologies can not be on the first line, otherwise it would be $0$. Let $j$ be the only case coming from $(1)$, then for $i\neq l,j$ we have that it can not be on the second line of $(2)$, because $0=r_i=q_i=-d_i(r-1)$ and $r-1,\ d_i>0$. For $j$ we have that the second line of $(1)$ does not apply since $q_j=r_j-1=-1$ and the third line of $(1)$ implies $0=q_j=r_j$ and $-d_j(r-1)-2=-1$, then $d_j(r-1)=-1$, that is a contradiction since both terms on the left side are non negative. So in those cases we have the vanishing of the cohomology, therefore we have that one case is either on the fourth or fifth line of $(1)$ and all the remaining cases are on the third line of $(2)$. If one case is on the fifth line of $(1)$ and all the others on the third line of $(2)$, we have that $q_i=m_i$ and $q_j=m_j$ for $i\neq l$. This means $$
	m_l\geq r_l=r>\sum_{i\neq l} q_i=\sum_{i\neq l}m_i,
	$$ 
	this implies that $m_l>\sum_{i\neq l} m_i$, that is a contradiction since $d_l=1$. The case coming from $(1)$ can not be on the fourth line of $(1)$, and all the others coming from the third line of $(2)$ either, because in such case we have that $-d_j(r-1)-2=-m_j-1$, that is, $m_j-1=d_j(r-1)$, but since $r>\sum_{i\neq l} q_i=\sum_{i\neq l,j}(m_i)+m_j-1\geq m_j$, we have that $r>m_j$, and the equality can not be satisfied since $d_j\geq 1$. In the case $k=2$, notice that $r\geq m_j$ and since $d_l=1$, we must have $d_j\geq 2$. Again the wished equality $m_j-1=d_j(r-1)$ can not hold. This implies that no cohomology can come from the first line of $(2)$. 
	
	\textbf{No case can come from the second line of $(1)$.} The last remaining possibility is to have the only case of $(1)$ coming from the second line. In such case we notice that we have $q_j=r_j-1$ and no case on $(2)$ comes from the first line, thus $q_l\geq r_l$ for $l\neq j$. This, together with the fact that $\sum_{i=1}^k q_l< r$, implies that $q_l=r_l$ for $l\neq j$. We have that $-2(r_j-1)=-d_j(r-1)$, therefore $r_j=r$ and $d_j=2$, or $r_j<r$ and $d_j=1$. 
	
	In the first case we have that $r_j=r$ implies that $r_i=0$ for every $i\neq j$. This means that we have $\Omega^{r_i}(2r_i-d_i(r-1))=\OO_{\PP^{m_i}}(-d_i(r-1))$. Since $-d_i(r-1)<0$, we have that the cohomology $H^{q_i}(\OO_{\PP^{m_i}}(-d_i(r-1)))$ does not vanish just for $q_i=m_i$, but since $m_i>0$, we have that $q=\sum_{i\neq j} q_i+q_j=\sum_{i\neq j} q_i+r-1\geq r$, therefore our cases of interest have vanishing cohomology.
	
	The second possibility for this cohomology to be non vanishing is that we have $r_j<r$ and $d_j=1$. Suppose that one of these non vanishing cohomologies comes from the second line of $(2)$ for some $l$. From the conditions on $(1)$ and $(2)$ respectively, we have that $2(r_j-1)=r-1$ and $2r_l-d_l(r-1)=0$, since $d_l\geq 1$ this implies that $$
	r_j=\frac{r}{2}+\frac{1}{2},\ r_l\geq \frac{r}{2}-\frac{1}{2},
	$$ 
	therefore $r_j+r_l\geq r$. Since $k$ is at least $3$, we have another case $i$, that comes either from the second or third line of $(2)$, and we must have $r_i=0$. If it is on the second line we have that $2r_i -d_i(r-1)=0$, thus $d_i(r-1)=0$, that is a contradiction. It can not be on the third line either, since $r_i=m_i=0$ is also a contradiction. Otherwise, in case $k=2$, we assume, without loss of generality, that $j=1$ and $l=2$, then $d_1=1$ and $d_2\geq 2$, thus $r_2\geq r-1$, so we obtain $$
	r_1+r_2\geq \frac{r}{2}+\frac{1}{2}+r-1\geq r+\frac{1}{2}>r,
	$$ that is a contradiction. Therefore, no case can come from the second line on $(2)$.

	This means that all the other cases must come from the third line of $(2)$, that is $q_l=m_l=r_l$ for $l\neq j$. We notice that $r_j>\frac{r}{2}$ implies that $r_j>\sum_{l\neq j} r_l$, thus $$
	m_j\geq r_j> \sum_{l\neq j} r_l=\sum_{l\neq j}m_l,
	$$
	this is a contradiction since $d_j=1$.

\end{proof}
\begin{cor}
On the hypothesis of Lemma \ref{vanishing} we have that
$$H^q((\bigwedge^r \mathcal E^\ast)\otimes \mathcal E)=0.$$
\end{cor}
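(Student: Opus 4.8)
The plan is to deduce the vanishing — in the range $q\leq r-1$ that the subsequent Koszul argument requires — directly from Lemma~\ref{vanishing} by a purely formal application of K\"unneth's formula to the module decomposition of $\bigwedge^r\mathcal E^\ast\otimes\mathcal E$ established above. Since $\mathcal E=\bigoplus_{j=1}^k\mathcal E_j$ with $\mathcal E_j=\pi_j^\ast Q_j\otimes\OO(d_1,\dots,d_j-1,\dots,d_k)$, we have $(\bigwedge^r\mathcal E^\ast)\otimes\mathcal E=\bigoplus_{j=1}^k(\bigwedge^r\mathcal E^\ast)\otimes\mathcal E_j$, so it is enough to treat each index $j$ separately.

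For a fixed $j$, the decomposition of $\bigwedge^r\mathcal E^\ast\otimes Q_j(d_1,\dots,d_j-1,\dots,d_k)$ established above rewrites $(\bigwedge^r\mathcal E^\ast)\otimes\mathcal E_j$ as the finite direct sum
$$\bigoplus_{r_1+\dots+r_k=r}\ \bigotimes_{l\neq j}\Omega^{r_l}_{\PP^{m_l}}\!\big(2r_l-d_l(r-1)\big)\ \otimes\ \bigwedge^{m_j-r_j}Q_j\otimes Q_j\!\big(-d_j(r-1)+r_j-2\big),$$
an external tensor product of bundles on the factors $\PP^{m_l}$ (with $0\leq r_l\leq m_l$). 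Applying K\"unneth's formula to each summand, $H^q$ of that summand is the direct sum over all $(q_1,\dots,q_k)$ with $q_1+\dots+q_k=q$ of
$$\bigotimes_{l\neq j}H^{q_l}\!\big(\Omega^{r_l}_{\PP^{m_l}}(2r_l-d_l(r-1))\big)\ \otimes\ H^{q_j}\!\big(\bigwedge^{m_j-r_j}Q_j\otimes Q_j(-d_j(r-1)+r_j-2)\big).$$
When $q\leq r-1$ every occurring tuple satisfies $\sum_l q_l\leq r-1$ and $\sum_l r_l=r\geq 2$, and the hypotheses of Lemma~\ref{vanishing} (the triangular inequality for the $m_l$ with $d_l=1$, resp.\ $(d_1,d_2)\neq(1,1)$ when $k=2$) are exactly those assumed here, so Lemma~\ref{vanishing} annihilates every such term. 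Summing over the finitely many $(r_1,\dots,r_k)$ and over $j$ yields $H^q\big((\bigwedge^r\mathcal E^\ast)\otimes\mathcal E\big)=0$.

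The substantive obstacle has already been cleared inside the proof of Lemma~\ref{vanishing}, where the constraints $\sum q_l\leq r-1$ and $\sum r_l=r$ are played off against the Bott cohomology tables $(1)$ and $(2)$ — in particular against the degenerate summands with some $r_l=0$, i.e.\ $\Omega^{0}(-d_l(r-1))=\OO_{\PP^{m_l}}(-d_l(r-1))$. Here nothing beyond reassembling those vanishings through the module decomposition and the K\"unneth isomorphism is required, so I anticipate no further difficulty; the only point worth flagging is that the statement must indeed be read for $q\leq r-1$, since for larger $q$ it fails in general (already $H^0(\End\mathcal E)\neq 0$), and $q\leq r-1$ is precisely the range exploited in the multisymmetric analogue of Lemma~\ref{h0 sym}.
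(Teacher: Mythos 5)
Your proof is correct and is precisely the argument the paper leaves implicit: the corollary is stated without proof as the immediate combination of the decomposition of $\bigwedge^r\mathcal E^\ast\otimes \mathcal E_j$ from the preceding lemma, K\"unneth's formula, and Lemma \ref{vanishing}. Your caveat is also well taken --- as printed the corollary omits the restriction $q\leq r-1$, which is implicitly supplied by the hypothesis $\sum q_l\leq r-1$ of Lemma \ref{vanishing} and is exactly the range used in the Koszul argument of Theorem \ref{COH}.
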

\begin{Teorema}\label{COH}
On the hypothesis of Lemma \ref{vanishing}, the induced homomorphism $$\mathcal E^\ast \otimes \mathcal E\rightarrow \mathcal I_{Z}\otimes \mathcal  E$$ induces an isomorphism at the level of global sections, where $Z$ is the zero locus of a section $s\in \mathcal E$. 
\end{Teorema}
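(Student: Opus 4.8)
The plan is to follow the strategy of Lemma~\ref{h0 sym}, now with $\mathcal E$ in place of $Q(d-1)$ and $\mathcal E^\ast\otimes\mathcal E$ in place of $\End Q$, feeding in the cohomology vanishing produced by Lemma~\ref{vanishing} and the Corollary above. First I would note that for a general section $s\in H^0(\mathcal E)$ — for instance $s=s_T$ with $T$ general, whose zero locus is $Eig(T)$ and consists of $ed_X$ points by Theorem~\ref{Friedland} — the scheme $Z=Z(s)$ is zero-dimensional, hence has the expected codimension $\rk\mathcal E=\sum_{l}m_l=:N=\dim X$ (this can also be seen from the global generation of $\mathcal E$, each $Q_l$ and each $\OO(d_1,\dots,d_l-1,\dots,d_k)$ being globally generated). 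Thus $s$ is a regular section and its Koszul complex
$$
0\to\bigwedge^{N}\mathcal E^\ast\to\cdots\to\bigwedge^2\mathcal E^\ast\xrightarrow{\iota_s}\mathcal E^\ast\xrightarrow{\iota_s}\mathcal I_Z\to 0
$$
is exact. Writing $\mathcal F_r=\im\big(\bigwedge^r\mathcal E^\ast\to\bigwedge^{r-1}\mathcal E^\ast\big)$ for $1\le r\le N$ (so $\mathcal F_1=\mathcal I_Z$ and $\mathcal F_N\cong\bigwedge^N\mathcal E^\ast$) and tensoring with $\mathcal E$, one gets the short exact sequences
$$
0\to\mathcal F_2\otimes\mathcal E\to\mathcal E^\ast\otimes\mathcal E\xrightarrow{\iota_s\otimes\mathrm{id}}\mathcal I_Z\otimes\mathcal E\to 0,\qquad 0\to\mathcal F_{r+1}\otimes\mathcal E\to\bigwedge^r\mathcal E^\ast\otimes\mathcal E\to\mathcal F_r\otimes\mathcal E\to 0
$$
for $2\le r\le N-1$.

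Next I would record the vanishing input. Combining K\"unneth's formula with the decomposition of $\bigwedge^r\mathcal E^\ast\otimes\mathcal E_j$ established above, the two Bott lemmas, and Lemma~\ref{vanishing}, one obtains exactly the Corollary stated just before: $H^q\big(\bigwedge^r\mathcal E^\ast\otimes\mathcal E\big)=0$ for all $r\ge 2$ and all $q$ with $0\le q\le r-1$. This is precisely the point at which the hypotheses of Theorem~\ref{fmulti} — the triangular inequality $m_l\le\sum_{i\ne l}m_i$ whenever $d_l=1$, and $(d_1,d_2)\ne(1,1)$ when $k=2$ — are used.

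The rest is a homological chase. In the long exact sequence of $0\to\mathcal F_{r+1}\otimes\mathcal E\to\bigwedge^r\mathcal E^\ast\otimes\mathcal E\to\mathcal F_r\otimes\mathcal E\to 0$, the groups $H^{r-2}\big(\bigwedge^r\mathcal E^\ast\otimes\mathcal E\big)$ and $H^{r-1}\big(\bigwedge^r\mathcal E^\ast\otimes\mathcal E\big)$ vanish by the Corollary, so the connecting homomorphisms give injections $H^{r-2}(\mathcal F_r\otimes\mathcal E)\hookrightarrow H^{r-1}(\mathcal F_{r+1}\otimes\mathcal E)$ and $H^{r-1}(\mathcal F_r\otimes\mathcal E)\hookrightarrow H^{r}(\mathcal F_{r+1}\otimes\mathcal E)$ for $2\le r\le N-1$. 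Chaining these from $r=2$ to $r=N-1$ yields
$$
H^0(\mathcal F_2\otimes\mathcal E)\hookrightarrow H^1(\mathcal F_3\otimes\mathcal E)\hookrightarrow\cdots\hookrightarrow H^{N-2}\big(\textstyle\bigwedge^N\mathcal E^\ast\otimes\mathcal E\big)=0
$$
and likewise
$$
H^1(\mathcal F_2\otimes\mathcal E)\hookrightarrow H^2(\mathcal F_3\otimes\mathcal E)\hookrightarrow\cdots\hookrightarrow H^{N-1}\big(\textstyle\bigwedge^N\mathcal E^\ast\otimes\mathcal E\big)=0,
$$
the two rightmost vanishings again coming from the Corollary (case $r=N$). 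Plugging $H^0(\mathcal F_2\otimes\mathcal E)=H^1(\mathcal F_2\otimes\mathcal E)=0$ into the long exact sequence of $0\to\mathcal F_2\otimes\mathcal E\to\mathcal E^\ast\otimes\mathcal E\to\mathcal I_Z\otimes\mathcal E\to 0$ shows that $H^0(\mathcal E^\ast\otimes\mathcal E)\to H^0(\mathcal I_Z\otimes\mathcal E)$ is an isomorphism; this map is induced by $\iota_s\otimes\mathrm{id}$, i.e.\ it is exactly the homomorphism in the statement. When $N$ is small — for instance $k=2$, $m_1=m_2=1$ so that $N=2$ and the range $2\le r\le N-1$ is empty — the chain of auxiliary sequences disappears and one concludes directly from the single short exact sequence, using $\mathcal F_2\cong\bigwedge^2\mathcal E^\ast$.

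The genuinely hard work is already done: it lies in Lemma~\ref{vanishing} and the Bott computations, where the combinatorics of the weights and the precise role of the triangular inequality are handled. Given those, Theorem~\ref{COH} is a formal diagram chase, and the only delicate bookkeeping points are the exactness of the Koszul complex (which forces us to take $Z$ of the expected zero dimension, hence $s$ general), keeping track of the two consecutive cohomological degrees that must vanish at each step of the iteration, and the degenerate low-dimensional cases.
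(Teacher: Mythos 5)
Your proof is correct and follows essentially the same route as the paper: the Koszul resolution of $\mathcal I_Z$ tensored with $\mathcal E$, broken into short exact sequences via the sheaves $\mathcal F_r$, with the vanishing of $H^q(\bigwedge^r\mathcal E^\ast\otimes\mathcal E)$ for $q\le r-1$ from Lemma \ref{vanishing} driving the chase down to $H^0(\mathcal F_2\otimes\mathcal E)=H^1(\mathcal F_2\otimes\mathcal E)=0$. Your added remarks on the regularity of the section (so that the Koszul complex is exact) and on the degenerate low-$N$ cases are points the paper leaves implicit.
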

\begin{proof}
We have the following Koszul complex $$
0=\bigwedge^{N+1}\mathcal E^\ast\xrightarrow{\varphi_{N}}\bigwedge^N\mathcal E^\ast\xrightarrow{\varphi_{N-1}}\dots\xrightarrow{\varphi_2}\bigwedge^2\mathcal  E^\ast\rightarrow\mathcal  E^\ast\rightarrow \mathcal I_Z \rightarrow 0.
$$

Let $\mathcal F_r$ to be defined as the quotient $\mathcal F_r=\bigwedge^r\mathcal E^\ast/\im \varphi_r$. Thus we obtain short exact sequences $$
\begin{matrix}
0\rightarrow\mathcal F_2\rightarrow \mathcal E^\ast\rightarrow \mathcal I_Z\rightarrow 0\\
0\rightarrow\mathcal F_{r+1}\rightarrow\bigwedge^r \mathcal E^\ast\rightarrow \mathcal F_{r}\rightarrow 0,
\end{matrix}
$$
for $r=2,\dots, N$.

Tensoring the second short exact sequence by $\mathcal E$,  we obtain the long exact sequence of cohomologies \begin{equation*}
\begin{split}
\dots\rightarrow H^{r-2}(\bigwedge^r \mathcal E^\ast\otimes \mathcal E)\rightarrow H^{r-2}(\mathcal F_r\otimes \mathcal E)\rightarrow H^{r-1}(\mathcal F_{r+1}\otimes \mathcal E)\rightarrow \\ \rightarrow H^{r-1}(\bigwedge^r\mathcal E^\ast\otimes \mathcal E)\rightarrow H^{r-1}(\mathcal F_r\otimes \mathcal E)\rightarrow H^r(\mathcal F_{r+1}\otimes\mathcal E)\rightarrow\dots
\end{split}
\end{equation*}
By the previous lemma we have that both terms on the left are zero, therefore we have that $$
H^{r-2}(\mathcal F_r\otimes \mathcal E)\cong H^{r-1}(\mathcal F_{r+1}\otimes \mathcal E), \ H^{r-1}(\mathcal F_r\otimes \mathcal E)\subset H^r(\mathcal F_{r+1}\otimes\mathcal E). 
$$
This implies that $$
\begin{matrix}
H^0(\mathcal F_2\otimes \mathcal E)\cong\dots\cong H^{N-1}(\mathcal F_{N+1}\otimes\mathcal  E)=0\\
H^1(\mathcal F_2\otimes\mathcal  E)\subset\dots\subset H^N(\mathcal F_{N+1}\otimes \mathcal E)=0.
\end{matrix}
$$
If we consider now the long exact sequence of cohomologies from $0\rightarrow\mathcal F_2\otimes \mathcal E\rightarrow \mathcal E^\ast\otimes \mathcal E\rightarrow \mathcal I_Z\otimes \mathcal E\rightarrow 0$, we obtain $$
H^0(\mathcal F_2\otimes \mathcal  E)\rightarrow H^0(\mathcal E^\ast\otimes \mathcal E)\rightarrow H^0(\mathcal I_Z\otimes \mathcal E)\rightarrow H^1(\mathcal F_2\otimes \mathcal E), 
$$
since the end terms are zero, we obtain the desired isomorphism.
\end{proof}

To make a comparison with the symmetric case, our next objective is to prove the extension of Corollary \ref{cor sym} to the multisymmetric case. That is, we will show if two sections $s,\ t$, that arise from the respective tensors $S,\ T\in \Sym^{d_1}V_1\otimes \dots\otimes \Sym^{d_k}V_k$, have the same image under $\psi$, that is, we have the equality of the zero locus $Z(s)=Z(t)$, then $s=\lambda t$.

\begin{lema}\label{end0}
	Let $\mathcal E_i=\pi^\ast_iQ_i(d_1,\dots,d_i-1,\dots,d_k)$. If $\dim \PP V_j\geq 2$ for all $j$, then $$
	H^0(\Hom(\mathcal E, \mathcal E_j))=H^0(\Hom(\mathcal E_j,\mathcal E_j))=\CC.
	$$
	Moreover, if we assume that $i\neq j$, then $$
	H^0(\Hom(\mathcal E_i,\mathcal E_j))=0.
	$$
\end{lema}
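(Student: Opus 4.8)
The plan is to compute each of the three $H^0$'s directly using Künneth's formula, reducing everything to cohomology on a single factor $\PP V_l$, and then to invoke Bott's theorem (in the elementary form already used in Lemma \ref{symmetric} and Lemma \ref{h0 sym}) on those single-factor bundles. Write $\mathcal E_i = \bigotimes_{l=1}^k \pi_l^\ast \mathcal B_l^{(i)}$, where $\mathcal B_l^{(i)} = \OO_{\PP^{m_l}}(d_l)$ for $l\neq i$ and $\mathcal B_i^{(i)} = Q_i(d_i-1)$. Then $\Hom(\mathcal E_i,\mathcal E_j) = \mathcal E_i^\ast\otimes \mathcal E_j = \bigotimes_{l} \pi_l^\ast\big(\mathcal B_l^{(i)\ast}\otimes\mathcal B_l^{(j)}\big)$, so by Künneth $H^0(\Hom(\mathcal E_i,\mathcal E_j)) = \bigotimes_l H^0\big(\mathcal B_l^{(i)\ast}\otimes\mathcal B_l^{(j)}\big)$; the whole tensor product vanishes as soon as one factor vanishes.

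First I would handle the off-diagonal case $i\neq j$. Pick the index $l=i$ (using $k\geq 2$, such an $l$ exists with $l\neq j$): the $i$-th factor is $H^0\big(Q_i^\ast(1-d_i)\otimes \OO_{\PP^{m_i}}(d_i)\big) = H^0\big(Q_i^\ast(1)\big) = H^0\big(\Omega^1_{\PP^{m_i}}(1)\big)$, which is $0$ by Bott (equivalently, $\Omega^1(1)$ has no sections on $\PP^m$ for $m\geq 1$). Hence $H^0(\Hom(\mathcal E_i,\mathcal E_j))=0$. For the diagonal case $i=j$, every factor with $l\neq j$ is $H^0(\OO_{\PP^{m_l}}(-d_l)\otimes\OO_{\PP^{m_l}}(d_l)) = H^0(\OO_{\PP^{m_l}}) = \CC$, and the $j$-th factor is $H^0\big(Q_j^\ast(1-d_j)\otimes Q_j(d_j-1)\big) = H^0(\End Q_j) = H^0(Q_j^\ast\otimes Q_j)$. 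On $\PP^{m_j}$ with $m_j\geq 2$, $\End Q_j$ is a simple homogeneous bundle with $H^0 = \CC$ (this is exactly the $1$-dimensional space $H^0(\End Q)$ appearing in Lemma \ref{h0 sym}, and it also follows from $Q_j$ being stable with $\End Q_j$ having only scalar global endomorphisms, or from a direct Bott computation). Therefore $H^0(\Hom(\mathcal E_j,\mathcal E_j))=\CC$.

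Finally, $\Hom(\mathcal E,\mathcal E_j) = \bigoplus_{i=1}^k \Hom(\mathcal E_i,\mathcal E_j)$, so $H^0(\Hom(\mathcal E,\mathcal E_j)) = \bigoplus_i H^0(\Hom(\mathcal E_i,\mathcal E_j)) = H^0(\Hom(\mathcal E_j,\mathcal E_j)) = \CC$ by the two computations above. The only genuine input is the single-factor vanishing/one-dimensionality statements $H^0(\Omega^1_{\PP^m}(1))=0$ and $H^0(\End Q)=\CC$ for $m\geq 2$; the main thing to be careful about is the hypothesis $\dim\PP V_j\geq 2$, which is needed so that $\End Q_j$ has only scalar sections — on $\PP^1$ one has $Q\cong\OO(1)$ and $\End Q=\OO$, so the diagonal statement still gives $\CC$, but for $i\neq j$ the argument above already only uses $m_i\geq 1$, so the real restriction to $m_j\geq 2$ is a convenience ensuring uniform application of Bott; I would state it as given and note where it is used. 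I expect no serious obstacle here — this lemma is a bookkeeping exercise in Künneth plus the two standard single-projective-space facts, both of which are already in play earlier in the paper.
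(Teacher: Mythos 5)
Your overall strategy (Künneth reduction to single‑factor cohomology, then Bott) is exactly the paper's, and the diagonal case and the direct‑sum bookkeeping are fine. But the off‑diagonal case contains a genuine error. You choose the factor at $l=i$ and compute it as $H^0\big(Q_i^\ast(1)\big)=H^0\big(\Omega^1_{\PP^{m_i}}(1)\big)=0$. This identification is wrong: the Euler sequence gives $Q_i^\ast=\Omega^1_{\PP^{m_i}}(1)$ (an identity the paper itself uses), so $Q_i^\ast(1)=\Omega^1_{\PP^{m_i}}(2)$, and $H^0\big(\Omega^1_{\PP^{m_i}}(2)\big)\cong\bigwedge^2V_i^\ast\neq 0$ for $m_i\geq 1$. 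So the factor you picked does \emph{not} vanish, and your argument for $H^0(\Hom(\mathcal E_i,\mathcal E_j))=0$ collapses. The vanishing actually comes from the $j$-th factor, which in your notation is $\mathcal B_j^{(i)\ast}\otimes\mathcal B_j^{(j)}=\OO(-d_j)\otimes Q_j(d_j-1)=Q_j(-1)$, and $H^0(Q_j(-1))=0$ precisely when $m_j\geq 2$. This is the paper's argument.

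This error also inverts your concluding remark about where the hypothesis $\dim\PP V_j\geq 2$ is used. It is not a ``convenience'': it is needed exactly for the off‑diagonal vanishing, since on $\PP^1$ one has $Q_j(-1)=\OO_{\PP^1}$, so $H^0(\Hom(\mathcal E_i,\mathcal E_j))=\CC\neq 0$ when $m_j=1$ (and the diagonal statement, by contrast, holds already for $m_j\geq 1$ since $\End Q_j$ is still simple there). This is precisely why the paper's Lemma \ref{end} has to treat the indices with $\dim\PP V_i=1$ by a separate argument. The fix is local --- replace your factor‑at‑$i$ computation by the factor‑at‑$j$ computation above --- but as written the key step fails.
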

\begin{proof}
 	For the second equality we have that
	\begin{equation*}
	\begin{aligned}
	\Hom(\mathcal E_i,\mathcal E_j)=\pi^\ast_i Q^\vee_i\otimes \pi^\ast_j Q_j(0,\dots,0,1,0,\dots,0,-1,0,\dots,0)=\\\OO_{\PP V_1}\otimes\dots\otimes\pi^\ast_iQ_i^\vee(1)\otimes\dots\otimes\pi^\ast_jQ_j(-1)\otimes\dots\otimes \OO_{\PP V_k}.
	\end{aligned}
	\end{equation*}
	We notice that, for all $i$, we have that $H^0(Q_i^\vee(1))=H^0(\Omega^1(2))\neq0$. Meanwhile, if $\dim \PP V_j\geq2$, then $H^0(Q_j(-1))=0$, thus by the K\"unneth's formula we have that $$
	H^0(\Hom(\mathcal E_i,\mathcal E_j))=0.
	$$
	
	On the other hand, $$\Hom(\mathcal E_j,\mathcal E_j)=\OO_{\PP V_1}\otimes\dots \otimes \big(\pi_j^\ast (Q_j^\ast\otimes Q_j)\big)\otimes\dots\otimes \OO_{\PP V_k}=\Hom(Q_j,Q_j),$$
	since the bundle $Q_j$ is simple we obtain the desired result.

\end{proof} 

\begin{lema}\label{end}
	Let $\rho\in End(H^0(\mathcal E))$ be a endomorphism of $H^0(\mathcal E)$, suppose that $f,\ g\in \Sym^{d_1}V_1\otimes\dots\otimes\Sym^{d_k}V_k$ are tensors such that $\rho(s_f)=\rho(s_g)$, then $s_f=\lambda s_g$ for $\lambda \in \CC$.
\end{lema}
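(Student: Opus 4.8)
The plan is to use Lemma~\ref{end0} to pin down the shape of $\rho$ and then to exploit the $SO(V_1)\times\cdots\times SO(V_k)$-equivariance of $\varphi$, already used in Theorem~\ref{segrever}, to force $\rho$ to act as a single scalar on everything that matters. The endomorphisms we have to consider are those induced by a global endomorphism of the bundle $\mathcal E=\bigoplus_{j=1}^{k}\mathcal E_j$; by Lemma~\ref{end0}, $H^0(\Hom(\mathcal E,\mathcal E))=\bigoplus_{i,j}H^0(\Hom(\mathcal E_i,\mathcal E_j))$ is $k$-dimensional, every such endomorphism respecting the direct sum decomposition and acting by a scalar on each summand. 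Hence $\rho=\bigoplus_{j=1}^{k}\lambda_j\,\mathrm{id}_{\mathcal E_j}$ for scalars $\lambda_1,\dots,\lambda_k\in\CC$. Writing $\varphi_j\colon\Sym^{d_1}V_1\otimes\dots\otimes\Sym^{d_k}V_k\to H^0(\mathcal E_j)$ for the $j$-th component of $\varphi$, the hypothesis relates $s_f$ and $s_g$ through $\rho$ and, written out componentwise, amounts to the family of identities $\lambda_j\,\varphi_j(g)=\varphi_j(f)$, one for each $j$; since $f$ and $g$ are general, every $\varphi_j(f)$ and $\varphi_j(g)$ is nonzero, so every $\lambda_j\neq0$.

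Next I would recall, as in the proof of Theorem~\ref{segrever}, the decomposition into irreducible $SO(V_1)\times\dots\times SO(V_k)$-modules $\Sym^{d_1}V_1\otimes\dots\otimes\Sym^{d_k}V_k=\bigoplus_{\underline t}H_{\underline t}$, where $\underline t=(t_1,\dots,t_k)$ runs over $0\le t_l\le\lfloor d_l/2\rfloor$ and $H_{\underline t}=H_{d_1-2t_1}\boxtimes\dots\boxtimes H_{d_k-2t_k}$. Each $\varphi_j$ is equivariant, so by Schur's lemma $\varphi_j|_{H_{\underline t}}$ is either zero --- which, by the computation in Theorem~\ref{segrever}, happens exactly when $t_j=d_j/2$ --- or an isomorphism onto an irreducible submodule of $H^0(\mathcal E_j)$. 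Since the $H_{\underline t}$ are pairwise non-isomorphic, so are their nonzero images under $\varphi_j$, and therefore these images lie in $H^0(\mathcal E_j)$ in direct sum. In particular the relation $\lambda_j\varphi_j(g)=\varphi_j(f)$ can be read off on each summand, $\lambda_j\,\varphi_j(g_{\underline t})=\varphi_j(f_{\underline t})$ for every $\underline t$, and for those $\underline t$ with $t_j\neq d_j/2$ one may apply the inverse of the isomorphism $\varphi_j|_{H_{\underline t}}$ to conclude $\lambda_j\,g_{\underline t}=f_{\underline t}$ inside $H_{\underline t}$.

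The final step is to specialise to the totally harmonic block $\underline t=(0,\dots,0)$. Because $d_l\ge1$ for every $l$ we have $0\neq d_l/2$, so the previous paragraph yields $\lambda_j\,g_{(0,\dots,0)}=f_{(0,\dots,0)}$ for \emph{every} $j$. For $g$ general the linear projection onto $H_{(0,\dots,0)}$ does not kill $g$, so $g_{(0,\dots,0)}\neq0$; hence all the $\lambda_j$ coincide, say $\lambda_j=\lambda$. Thus $\rho=\lambda\,\mathrm{id}_{\mathcal E}$ and, reassembling the components, $s_f=\lambda s_g$, which is the assertion.

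The only point I expect to require care is the claim that the images $\varphi_j(H_{\underline t})$ sit inside $H^0(\mathcal E_j)$ in direct sum --- equivalently, that $\varphi_j$ genuinely detects each isotypic component $g_{\underline t}$ of $g$ --- but this follows at once from the elementary fact that finitely many pairwise non-isomorphic irreducible submodules of a module always form a direct sum; the rest is formal, given Lemma~\ref{end0} and the explicit highest weight vectors produced in the proof of Theorem~\ref{segrever}. One should also keep track of the genericity of $f$ and $g$ (as in Theorem~\ref{fmulti}) in order to guarantee that no $\lambda_j$ vanishes and that $g_{(0,\dots,0)}\neq0$.
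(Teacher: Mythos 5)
Your argument has a genuine gap at its very first step. You invoke Lemma~\ref{end0} to conclude that every $\rho\in H^0(\End\mathcal E)$ is block-diagonal, acting by a scalar $\lambda_j$ on each summand $\mathcal E_j$, so that $H^0(\End\mathcal E)$ is $k$-dimensional. But the vanishing $H^0(\Hom(\mathcal E_i,\mathcal E_j))=0$ for $i\neq j$ in Lemma~\ref{end0} is proved only under the hypothesis $\dim\PP V_j\geq 2$: the key point there is $H^0(Q_j(-1))=0$, which fails when $\dim\PP V_j=1$ (then $Q_j(-1)=\OO_{\PP^1}$ and $H^0(\Hom(\mathcal E_i,\mathcal E_j))\cong\CC$ even for $i\neq j$). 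The lemma is applied in Theorem~\ref{psi} under the weaker assumption $m_j\geq 1$, so the case of $\PP^1$-factors cannot be excluded; there $\rho$ has nonzero off-diagonal blocks $M_{ij}$ and your componentwise identity $\lambda_j\varphi_j(g)=\varphi_j(f)$ is replaced by $\varphi_j(f)=\lambda_j\varphi_j(g)+\sum_{i\neq j}M_{ij}(\varphi_i(g))$, which your Schur-lemma bookkeeping does not address. The paper's proof spends most of its effort precisely here: it splits the indices into $I_1=\{i:\dim\PP V_i=1\}$ and $I_2=\{i:\dim\PP V_i\geq 2\}$, uses the diagonal shape of $s_g=(g,\dots,g)$ on the $I_2$-components to force $g=\lambda f$ when $I_2\neq\emptyset$, and treats $I_2=\emptyset$ separately using $\dim H^0(\Hom(\mathcal E_i,\mathcal E_j))=1$. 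You would need to supply an argument for these cases (or restrict to $m_j\geq 2$, which is weaker than what Theorem~\ref{psi} uses).

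On the part you do cover (all $m_j\geq 2$), your argument is correct and in fact tightens the paper's: the paper passes rather quickly from ``the $j$-th components of $\rho(s_f)$ and $s_g$ agree'' to ``$\lambda_j f=g$'', which implicitly requires injectivity of $\varphi_j$ on the relevant isotypic pieces; your use of the $SO(V_1)\times\dots\times SO(V_k)$-decomposition and the totally harmonic block $H_{(0,\dots,0)}$ to equalize the scalars $\lambda_j$ makes that step precise.
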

\begin{proof}

		Let $I_1$ be the set of indices such that $\dim \PP V_i=1$ and $I_2$ be the set of indices such that $\dim \PP V_i\geq 2$. By the previous lemma, we have that $H^0(Hom(\mathcal E_i, \mathcal E_j))=0$, whenever $j\in I_2$, that is, no map can act there besides its own endomorphism.
	
	Now we consider a section $s_f$ coming from a tensor $f\in \Sym^{d_1}V_1\otimes\dots\otimes \Sym^{d_k}V_k$. We recall that the map $\varphi$ associates $f$ to the diagonal map of its flattenings in each coordinate $l$, that is, $f:\Sym^{d_1}V_1\otimes\Sym^{d_l-1}V_l\otimes\dots\otimes \Sym^{d_k}V_k\rightarrow V_l$. This means that $\varphi(f)=s_f$ can be interpreted as the diagonal element $s_f=(f,\dots,f)$, where $f$ in the $l$ entry of this vector means the section of $\mathcal E_l$ corresponding to $f$.
	
	Suppose that the first $l$ indices are in $I_1$ and the others in $I_2$. Applying $\rho$ to $\varphi(f)$ we obtain that $$
	\rho(\varphi(f))=(M_1(f),\dots,M_l(f),\lambda_{l+1}f,\dots,\lambda_{k}f)=(g,\dots,g)=s_g,
	$$
	where $g\in \Sym^{d_1}V_1\otimes\dots\otimes \Sym^{d_k}V_k$ is a tensor, thus from the previous lemma we have that all the maps $\lambda_i$ must be multiplication by scalars, this means that $\lambda f=g$, for some $\lambda\in \CC$.
	
	It remains the case when $I_2=\emptyset$. In such case we notice that $$Hom(\mathcal E_i,\mathcal E_j)=(0,\dots,0,Q^\ast_i(1),0,\dots,0,Q_j(-1),0,\dots,0);$$ since the dimension of each $\PP V_i$ is $1$, we have $Q_j(-1)=\OO_{\PP^1}$, moreover $Q^\ast_i(1)=\Omega^1_{\PP^1}(2)=\OO_{\PP^1}$. We recall that both of those bundles are $1$-dimensional at the level of global sections, that is, $\dim H^0(\OO_{\PP^1})=1$, therefore $\dim H^0(Hom(\mathcal E_i,\mathcal E_j))=1$. This implies that if $\rho(s_g)=s_f$, then $
	s_f=\lambda s_g
	$
	is the only possible image.
	
\end{proof}

Combining the previous results together, we obtain the next theorem.
\begin{Teorema}\label{psi}
	Let $S,T\in\Sym^{d_1}V_1\otimes\dots\otimes \Sym^{d_k}V_k$ be two general tensors. Assume that $m_l\leq\sum_{i\neq l}m_i$ holds for every $l$ such that $d_l=1$, $k\geq 3$, and that $m_j\geq 1$ for all $j$. Let $s,t\in \mathcal E$ be the sections coming from the tensors, $S$ and $T$, and assume that $Z(s)=Z(t)$, then $s=\lambda t$, for $\lambda\in \CC^\ast$.
	
	Additionally, if $k=2$ and we also consider the hypothesis that $(d_1,d_2)\neq (1,1)$, then the result still holds.
\end{Teorema}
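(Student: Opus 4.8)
The strategy is to transfer the statement ``two sections with the same zero locus differ by a scalar'' into a statement about a single global endomorphism of $\mathcal E$, exactly as Corollary \ref{cor sym} was deduced from Lemma \ref{h0 sym} in the symmetric case; the only change is that the one-dimensionality used there is now replaced by the description of $H^0(\End\mathcal E)$ obtained in Lemmas \ref{end0} and \ref{end}. To set this up, recall that for a general tensor $S$ the scheme $Z:=Z(s)$ is a reduced set of $ed_X$ points and $s=s_S$ is a regular section of $\mathcal E$, so the Koszul complex of $s$ resolves $\mathcal I_Z$ and Theorem \ref{COH} applies (its hypotheses being precisely those of Lemma \ref{vanishing}, which are among the standing assumptions of the present statement). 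The hypothesis $Z(s)=Z(t)$ says that $t=s_T$ vanishes scheme-theoretically along $Z$: in any local trivialization the components of $t$ generate $\mathcal I_{Z(t)}=\mathcal I_Z$, hence $t\in H^0(\mathcal I_Z\otimes\mathcal E)$.

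Next I would invoke Theorem \ref{COH}. The Koszul boundary map $\mathcal E^\ast\otimes\mathcal E\to\mathcal I_Z\otimes\mathcal E$ is contraction of the $\mathcal E^\ast$ factor against $s$; under the canonical identification $\mathcal E^\ast\otimes\mathcal E=\End\mathcal E$ this is the evaluation $\rho\mapsto\rho(s)$, and it carries $\mathrm{id}_{\mathcal E}$ to $s$. Since by Theorem \ref{COH} the induced map $H^0(\End\mathcal E)\xrightarrow{\ \sim\ }H^0(\mathcal I_Z\otimes\mathcal E)$ is an isomorphism of vector spaces, and since $t$ lies in the target, there is a (unique) $\rho\in H^0(\End\mathcal E)$ with $\rho(s)=t$, that is, $\rho(s_S)=s_T$.

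Finally I would apply the rigidity of global endomorphisms. Writing $\mathcal E=\bigoplus_l\mathcal E_l$ and $\rho=(\rho_{ij})$ with $\rho_{ij}\colon\mathcal E_i\to\mathcal E_j$, Lemma \ref{end0} gives, for every index $j$ with $m_j\geq2$, that $\rho_{ij}=0$ for $i\neq j$ and $\rho_{jj}=\lambda_j\cdot\mathrm{id}$ for a scalar $\lambda_j$; since $s_S$ and $s_T$ are the diagonal sections $(S,\dots,S)$ and $(T,\dots,T)$, reading off the $j$-th component of $\rho(s_S)=s_T$ yields $T=\lambda_jS$. When instead all $m_j=1$, the same conclusion $s_T=\lambda s_S$ is exactly the last case treated in the proof of Lemma \ref{end}, where $H^0(\Hom(\mathcal E_i,\mathcal E_j))$ is one-dimensional and the genericity of $S,T$ is used. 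In either case $s_T=\lambda s_S$, and $\lambda\neq0$: general $S$ and $T$ avoid the at most one-dimensional $\ker\varphi$ of Theorem \ref{segrever}, so $s_S,s_T\neq0$. Hence $s=\lambda^{-1}t$ with $\lambda^{-1}\in\CC^\ast$, as claimed; the hypotheses on $k$, on the triangular inequality, and on $(d_1,d_2)$ enter only to make Theorem \ref{COH} and Lemmas \ref{end0}--\ref{end} applicable.

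The substantive difficulty is entirely upstream: the Bott-type vanishing of Lemma \ref{vanishing} that powers Theorem \ref{COH}, and the endomorphism computation of Lemma \ref{end0}. The proof of Theorem \ref{psi} itself is a short assembly, and the only points requiring genuine care in writing it out are bookkeeping ones: checking that the Koszul boundary map really is evaluation at $s$ with $\mathrm{id}_{\mathcal E}\mapsto s$, and checking that an endomorphism sending the diagonal section $s_S$ to the diagonal section $s_T$ must do so through a single scalar common to all slots rather than through slot-dependent scalars.
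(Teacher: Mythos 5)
Your proposal is correct and follows exactly the paper's own route: identify $t$ as an element of $H^0(\mathcal I_Z\otimes\mathcal E)$, use the isomorphism $H^0(\End\mathcal E)\cong H^0(\mathcal I_Z\otimes\mathcal E)$ from Theorem \ref{COH} to produce $\rho$ with $\rho(s)=t$, and then invoke the rigidity of endomorphisms acting on diagonal sections (Lemmas \ref{end0} and \ref{end}) to conclude $t=\lambda s$. The extra bookkeeping you flag (the Koszul map being evaluation at $s$, the common scalar across slots, $\lambda\neq0$) is exactly what the paper delegates to Lemma \ref{end}, so nothing is missing.
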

\begin{proof}
	
	The Theorem \ref{COH} says that the map $H^0(\End(\mathcal E))\rightarrow H^0(\mathcal I_Z\otimes E)$ defined by $\rho\mapsto \rho(s_g)$ is an isomorphism. This means that if $s,\ t$ are two tensors such that $Z(s)=Z(t)$, then there exists a morphism $\rho\in \End \mathcal E$ such that $$\rho(t)=s.$$ Furthermore, from the Lemma \ref{end} we obtain that $s=\lambda t$.
	
\end{proof}

With all those results in mind, we can conclude with a note that combining together Theorem \ref{segrever} with the Theorem \ref{psi} we obtain the Theorem \ref{fmulti}. This finishes the proof of the main results.

\subsection{A remark on sections coming from tensors}

We make a brief remark that the Lemma \ref{end} does not mean that all the morphisms in $\End(\mathcal E)$ are multiplication by scalars, since the map $\varphi$ is not surjective in general. Indeed, for each space $V_l$, we can compute what is the image of $\varphi_l:\Sym^{d_l}V_l\rightarrow H^0(Q_l(d_l-1))$. 

We have the Euler exact sequence $$
0\rightarrow H^0(\OO(d_l-2))\rightarrow H^0(\OO(d_l-1)\otimes V_l)\rightarrow H^0(Q(d_l-1))\rightarrow 0.
$$ 
Moreover we have an isomorphism $$H^0(\OO(d_l-1)\otimes V_l)\cong\Sym^{d_l-1}V_l^\ast \otimes V_l\text{ and } H^0(\OO(d_l-2))\cong \Sym^{d_l-2}V_l$$

In terms of Young diagrams, $\Sym^{d_l-2}V_l$ has the representation $$
\yng(4,4,4,4,4)
$$
where the tableaux has $d_l-2$ columns and $\dim V_l-1$ rows. Moreover, $\Sym^{d_l-1}V_l^\ast \otimes V_l$ is represented by $$
\yng(5,5,5,5,5) \bigotimes \yng(1)=\yng(5,4,4,4,4)\bigoplus \yng(4,4,4,4,4)$$
by Pieri's formula. Thus $H^0(Q_l(d_l-1))$ is given by $$
\yng(5,4,4,4,4)
$$

We can compute what exactly is $H^0(Q(d_l-1))$ in terms of irreducible $SO(m_l)$-representations by restricting the product $\Sym^{d_l-1}V_l\otimes V$ as $SL(m_l)$-representations to $SO(m_l)$-representations. Indeed, we have that as $SL(m_l)$-representations $$
\Sym^{d_l-1}V_l\otimes V_l=\Sym^{d_l}V_l\oplus \Gamma^{(d_l-2,1,0,\dots,0)},
$$
the first summand restricts to $$Res^{SL(m_l)}_{SO(m_l)}(\Sym^{d_l}V_l)=H_{d_l}\oplus H_{d_l-2}\oplus\dots\oplus \setlength\arraycolsep{0pt}
\renewcommand\arraystretch{1.25}
\left\{
\begin{array}{llll}
H_1 \text{ if $d_l$ is odd} \\
H_0 \text{ if $d_l$ is even}
\end{array}
\right.
$$

To compute the restriction of $\Gamma^{(d_l-2,1,0,\dots,0)}$ we now utilise the fact that $$Res^{SL(m_l)}_{SO(m_l)}(\Gamma^\lambda)=\bigoplus_{\overline\lambda} N_{\lambda\overline\lambda}\Gamma^{\overline\lambda},$$ with $N_{\lambda\overline\lambda}=\sum_\delta N_{\delta,\overline\lambda\lambda},$ where $N_{\delta\overline\lambda\lambda}$ is the Littlewood-Richardson coefficient, $\overline\lambda=(\overline\lambda_1\geq\overline\lambda_2\geq\dots\geq\overline\lambda_t\geq0)$, $m_l=2t$ or $2t+1$, and $\delta=(\delta_1\geq\delta_2\geq\dots\geq0)$, with $\delta_i$ even for all $i$. For more details about the restriction and Littlewood-Richardson coefficients we suggest \cite{Fulton}.

In our setting $\lambda=(d_l-2,1,\dots,0)$ is represented as the Young tableaux given by $$\yng(10,1)$$
with $d_l-1$ boxes in the first line and 1 box on the second. 

If $\delta=(0)$, the only possible tableaux for $\overline{\lambda}$ is the tableaux of $\lambda$ itself, that is, $\overline\lambda=(d_l-2,1,0,\dots,0)$.

If $\delta=2h$, for $h\geq1$ and $d_l-2h-2\geq 0$, then there are two other possibilities for $\overline{\lambda}$, indeed $\overline{\lambda}=(d_l-2h,0,\dots,0)$ or $\overline{\lambda}=(d_l-2h-2,1,0,\dots,0)$, indeed this results in
$$\begin{Young}
&&&&&&$1$&$1$&$1$&$1$\cr
1\cr
\end{Young}
$$
and $$\begin{Young}
&&&&&&$1$&$1$&$1$&$1$\cr
2\cr
\end{Young}
$$
Notice that, if we tried to add more columns to the second line, we would have a box of index $1$ and a box of index $2$ in the first row, that is forbidden by the Littlewood-Richardson rule since they come from the same column. Similarly, if we tried to add more rows, we would have the same problem. Therefore those are all the possible $\overline\lambda$. Let $$\Gamma=\Gamma^{(d_l-2,1,0,\dots,0)}\oplus\Gamma^{(d_l-4,1,0,\dots,0)}\oplus\dots\oplus\setlength\arraycolsep{0pt}
\renewcommand\arraystretch{1.25}
\left\{
\begin{array}{llll}
\Gamma^{(1,1,0,\dots,0)} \text{ if $d_l$ is odd} \\
\Gamma^{(0,1,0,\dots,0)} \text{ if $d_l$ is even}
\end{array}
\right.
$$ and
$$
\mathcal H_{d_l-2}=H_{d_l-2}\oplus H_{d_l-4}\oplus\dots\oplus \setlength\arraycolsep{0pt}
\renewcommand\arraystretch{1.25}
\left\{
\begin{array}{llll}
H_1 \text{ if $d_l$ is odd} \\
H_2 \text{ if $d_l$ is even}.
\end{array}
\right.
$$
We obtain that $$
Res^{SL(m_l)}_{SO(m_l)}(\Gamma^{(d_l-2,1,0,\dots,0)})=\mathcal H_{d_l-2}\oplus \Gamma,
$$
moreover $$
Res^{SL(m_l)}_{SO(m_l)}(\Sym^{d_l-1}V_l\otimes V_l)=\mathcal H_{d_l-2}\oplus \Gamma\oplus H_{d_l}\oplus H_{d_l-2}\oplus\dots\oplus \setlength\arraycolsep{0pt}
\renewcommand\arraystretch{1.25}
\left\{
\begin{array}{llll}
H_1 \text{ if $d_l$ is odd} \\
H_0 \text{ if $d_l$ is even}
\end{array}
\right..
$$
Now we can compute the $H^0(Q(d_l-1))$, from the Euler exact sequence we have that it is given by the difference $\Sym^{d_l-1}V_l\otimes V_l-\Sym^{d_l-2}V_l$, that is,\begin{equation*}
\begin{aligned}
H^0(Q(d_l-1))=&\mathcal H_{d_l-2}\oplus \Gamma\oplus  H_{d_l}\oplus H_{d_l-2}\oplus  \dots \oplus \setlength\arraycolsep{0pt}
\renewcommand\arraystretch{1.25}
\left\{
\begin{array}{llll}
H_1 \text{ if $d_l$ is odd} \\
H_0 \text{ if $d_l$ is even}
\end{array}
\right. \\-&
H_{d_l-2}\oplus H_{d_l-4} \oplus\dots\oplus \setlength\arraycolsep{0pt}
\renewcommand\arraystretch{1.25}
\left\{
\begin{array}{llll}
H_1 \text{ if $d_l$ is odd} \\
H_0 \text{ if $d_l$ is even},
\end{array}
\right.\\
\end{aligned}
\end{equation*}
therefore we obtain that, $$H^0(Q(d_l-1))=\mathcal H_{d_l}\oplus \Gamma,$$ where $\mathcal H_{d_l}$ is the subrepresentation of the sections coming from tensors by the Lemma \ref{symmetric}.

\printbibliography

 \end{document}